\newtheorem{proposition}{Proposition}
\newtheorem{theorem}[proposition]{Theorem}
\newtheorem{lemma}[proposition]{Lemma}
\theoremstyle{definition}
\newtheorem{remark}[proposition]{Remark}
\numberwithin{equation}{section}
 \def\R{\mathbb{R}}
  \def\Z{\mathbb{Z}}
  \def\Q{\mathbb{Q}}
  \def\C{\mathbb{C}}
  \def\N{\mathbb{N}}
\newcommand{\A}{\mathcal{A}}
\newcommand{\B}{\mathcal{B}}
\renewcommand{\H}{{\mathcal{H}}}
\newcommand{\U}{{\mathcal{U}}}
\newcommand{\D}{{\mathcal{D}}}
\newcommand{\K}{{\mathcal{K}}}
\newcommand{\ee}{\mathrm{e}}
\newcommand{\jj}{J}
\newcommand{\p}{\partial}
\renewcommand{\d}{\mathrm{d}}
\renewcommand{\div}{\mathrm{div}}
\newcommand{\re}{\mathrm{Re}}
\newcommand{\im}{\mathrm{Im}}
\newcommand{\eqnb}{\begin{equation}}
\newcommand{\eqne}{\end{equation}}
\title[Nonsymmetric spiral vortex sheets]{Existence of nonsymmetric logarithmic spiral vortex sheet solutions to the 2D Euler equations}
\author{Tomasz Cie\'{s}lak}
\address{Institute of Mathematics, Polish Academy of Sciences, 00-656 Warsaw, Poland\\ \newline and Faculty of Mathematics and Information Sciences, Warsaw University of Technology, 00-662 Warsaw, Poland}
\email{cieslak@impan.pl}
\author{Piotr Kokocki}
\address{Faculty of Mathematics and Computer Science, Nicolaus Copernicus University, 87-100 Toru\'n, Poland}
\email{pkokocki@mat.umk.pl}
\author{Wojciech S. O\.za\'nski}
\address{Institute of Mathematics, Polish Academy of Sciences, 00-656 Warsaw, Poland\\ \newline and Florida State University, Tallahassee, FL 32306, USA}
\email{wozanski@fsu.edu}
\begin{document}

\date{\today}


\begin{abstract}
We consider solutions of the 2D incompressible Euler equation in the form of $M\geq 1$ concentric logarithmic spirals. We prove the existence of a generic family of spirals that are nonsymmetric in the sense that the  angles of the individual spirals are not uniformly distributed over the unit circle. Namely, we show that if $M=2$ or $M\geq 3 $ is an odd integer such that certain non-degeneracy conditions hold, then, for each $n \in \{ 1,2 \}$, there exists a logarithmic spiral with $M$ branches of relative angles arbitrarily close to $\overline\theta_{k} = kn\pi/M$ for $k=0,1,\ldots , M-1$, which include halves of the angles of the Alexander spirals. We show that the non-degeneracy conditions are satisfied if $M\in \{ 2, 3,5,7,9 \}$, and that the conditions hold for all odd $M>9$ given a certain gradient matrix is invertible, which appears to be true by numerical computations.
\end{abstract}

\maketitle

\section{Introduction}\label{sec_intro}

We consider vortex sheets solutions to the 2D Euler equation in the form of $M\geq 1$ concentric logarithmic spirals, described by
\eqnb\label{spirals}
\begin{cases}
Z_m(\theta , t) = t^\mu \ee^{a (\theta - \theta_m )} \ee^{i\theta },\\
\Gamma_m (\theta ,t) = g_m t^{2\mu -1 }  \ee^{2a (\theta - \theta_m )},
\end{cases}
\eqne
where $t>0$, $\theta \in \R$, and
\begin{equation*}
a>0, \quad \mu \in \R , \quad g_m \in \R\setminus\{0\}, \quad \theta_m \in \R
\end{equation*}
for $m=0,\ldots , M-1$, denote parameters of the spiral, with the assumption that 
\eqnb\label{order}
0=\theta_{0}< \theta_1 < \theta_2 <\ldots < \theta_{M-1}.
\eqne

In the above equations $Z_m (\theta , t)$ parametrizes the $m$-th spiral $\Sigma_m(t)\subset \C$, while $\Gamma_m (\theta ,t )$ parametrizes its vorticity distribution. Namely, the vorticity of the associated flow $v(t)$ is assumed to be of the form 
\begin{equation}\label{weak-vort}
\mathrm{curl} \, v (t) = \sum_{k=0}^{M-1}\gamma (t)\delta_{\Sigma_{k}(t)},\quad t>0,
\end{equation}
in the sense of distributions, where $\gamma(t) $ denotes the density function of vorticity along the spirals $\Sigma (t)\coloneqq \bigcup_{k=0}^{M-1} \Sigma_k (t)$ for a given $t>0$, which can be described by $\gamma(Z_{k}(\theta,t),t)\coloneqq {\p_\theta \Gamma_{k}(t,\theta)}\left|{\p_\theta  Z_{k}(\theta,t)}\right|^{-1}$ for $k=0,\ldots , M-1$, $\theta \in \R$.

The authors' previous work \cite{cko} characterizes when the spirals described by \eqref{spirals} are weak solutions to the Euler equations,
\eqnb\label{euler_intro}
\begin{split}
\p_t v + v\cdot\nabla v + \nabla p &=0, \\
\div \, v &=0,
\end{split}
\eqne
on $\R^2 \times (0,\infty )$. Here the notion of the \emph{weak solution of \eqref{euler_intro}} refers to a vector field $v\in L^2_{loc} (\R^2 \times (0,\infty ))$ that is weakly divergence-free, that is $\int_{\R^{2}}v\cdot\nabla\psi = 0$ for all $\psi\in C_{0}^{\infty}(\R^{2})$, and the weak formulation of \eqref{euler_intro},
\begin{equation*}
\int_0^\infty \int_{\R^2} \left( v \cdot \p_t \varphi + \sum_{1\le i,j\le 2}v_i v_j \p_i \varphi_j \right) =0,
\end{equation*}
holds for all divergence-free $\varphi \in C_0^\infty (\R^2 \times (0, \infty );\R^{2})$. The result of \cite{cko} shows that if 
\begin{equation*}
v (z,t) =  t^{\mu -1} w \left( \frac{z}{t^\mu } \right),
\end{equation*}
where the profile function $w$ is given in the polar coordinates by 
\begin{equation*}
w(z) \coloneqq  \ee^{i\theta } \sum_{k=0}^{M-1} \frac{2ag_{k} }{r(a-i)} \left(  r^{\frac{2a}{a+i} }\ee^{A  (\theta_k -\theta ) } \frac{\ee^{2\pi \jj (r,\theta ,k)A }}{1-\ee^{2\pi A}} \right)^*,
\end{equation*}
where $z=r\ee^{i\theta }\in \C \setminus (\Sigma \cup \{ 0 \})$ and $
\jj =\jj (r,\theta ,k ) \coloneqq \min \left\lbrace j\in \Z \colon a(2\pi j + \theta_k - \theta ) + \ln r >0 \right\rbrace$ denotes the \emph{winding number of the spiral}, then the equation \eqref{weak-vort} holds in the weak sense \cite[Theorem~1.8(iii)]{cko}. Moreover, such $v$ is a weak solution of \eqref{euler_intro} if and only if 
\begin{equation}\label{eq-disc}
\sum_{k=0}^{M-1} \mathcal{A}_{mk}(a,\Theta)g_{k} = -\frac{ \sinh (\pi A)}{2a^2 }   \left( 1+a^2 -2\mu (1-ai ) \right)
\end{equation}
for $m=0,\ldots , M-1$ \cite[Theorem~1.3]{cko}, where 
\eqnb\label{theta_2pi}
\Theta \coloneqq (\theta_1 , \ldots , \theta_{M-1} )\in (0,2\pi )^{M-1},
\eqne 
\eqnb\label{def_of_A}
A\coloneqq - \frac{2ai}{a+i},
\eqne
and 
\begin{equation}\label{matrix-a}
\mathcal{A}_{mk}(a,\Theta):= \ee^{A(\theta_{k}-\theta_{m})}
\left\{\begin{aligned}
&\ee^{-\pi A}, && \text{if} \ \ \theta_{k}>\theta_{m},\\
&\cosh(\pi A), && \text{if} \ \ \theta_{k}=\theta_{m},\\
&\ee^{\pi A}, && \text{if} \ \ \theta_{k}<\theta_{m}
\end{aligned}\right. 
\end{equation}
for $k,m \in \{ 0, \ldots ,  M-1 \}$.  Here we have identified a complex function $v(z,t) = v_1 (x,y,t) + i v_2( x,y,t)$ with the 2D vector field $(v_1(x,y,t),v_2(x,y,t))$, where $z=x+iy$.

We note that, although the structure of the discrete system \eqref{eq-disc} is rather complicated, it does enjoy some symmetries. For example, it is invariant under the angle shift $\theta_{k}\mapsto \theta_{k}+\alpha$ and permutations $\sigma$ of the parameters $\theta_{k}\mapsto \theta_{\sigma(k)}$, $g_{k}\mapsto g_{\sigma(k)}$ for $ 0\le k \le M-1$, which we have already used to assume that $\theta_0 =0$ and that the angles are ordered  \eqref{order}. Hence, if we define the set 
\eqnb\label{def_of_U} \U \coloneqq \{\Theta=(\theta_1 , \theta_2 , \ldots , \theta_{M-1})\in(0,2\pi )^{M-1} \ | \ 0<\theta_{1}<\theta_{2}
<\ldots<\theta_{M-1} <2\pi \},
\eqne
then the problem of existence of logarithmic spiral solutions to the Euler equations \eqref{euler_intro} reduces to the problem of choice of parameters $a>0$, $g_0,\ldots , g_{M-1}  \in \R\setminus \{ 0 \}$, $\Theta \in \U$ satisfying \eqref{eq-disc} for all $0\le m\le M-1$.

So far only one generic example of logarithmic spirals has been known. It corresponds to the case of equally distributed angles $\theta_0,\ldots , \theta_{M-1}$, with the same weights $g_k$, namely
\eqnb\label{alex}
\theta_k = \frac{2k\pi }{M}, \qquad g_k = g \in \R\setminus \{ 0 \},
\eqne
which are commonly referred to as  the \emph{Alexander spirals} \cite{alexander}. The case $M=1$ is often referred to as the \emph{Prandtl spiral}, and its history goes back to the 1922 work of Prandtl \cite{prandtl22} (see also \cite{prandtl05,prandtl20}), who introduced them as a model of  wingtip  vortices. In this particular case \eqref{eq-disc} becomes 
\eqnb\label{prandtl_coeffs}
-2a^2 g \coth (\pi A)=a^2+1-2\mu + 2a\mu i,
\eqne
see \cite[Corollary~1.4]{cko}.

This gives 2 equations for 3 unknowns: $a$, $g$, $\mu$, and so one expects infinitely many solutions. In fact, one can see that for all sufficiently large $a$ there exists unique choice of $\mu, g $ such that \eqref{prandtl_coeffs} holds, see the discussion following \cite[Corollary~1.4]{cko} for details.

We also note a recent result of the authors \cite{cko_instab} showing linear instability of
Alexander spirals.

Besides Alexander spirals \eqref{alex} and the well-known stationary discontinuous shear flows, there are  few known vortex sheet solutions (i.e. with vorticity supported on a curve) satisfying rigorously the 2D Euler equations. One of them is the Prandtl-Munk vortex sheet introduced in \cite{m}, which was shown to satisfy the nonhomogeneous Euler equations in \cite{nls}. 

Actually, as shown recently in \cite{gpsi}, under mild regularity assumptions, the only stationary (or uniformly rotating) vortex sheets supported on a finite union of disjoint curves solving 2D Euler is trivial, i.e. it is a union of concentric circles. In \cite{ps} an explicit construction of intersecting Munk-like vortex sheets is given (see also \cite{pls}). The first  rigorous example of time-dependent self-similar vortex sheets was provided in the authors' previous work \cite{cko}, see also \cite{js}. Moreover, up to our knowledge, the only other rigorous examples of nontrivial time-dependent global-in-time  vortex sheet solutions are the perturbation of steady solutions, see \cite{co} or \cite{dr}. 

The purpose of this work is to provide a new generic family of  vortex sheets in the form of logarithmic spirals that are nonsymmetric in the sense that the angles $\theta_k$ are not uniformly distributed, that is, $\theta_{k+1}- \theta_k\ne \theta_k - \theta_{k-1}$ for at least one $k\in \{ 1, \ldots , M-1 \}$ (where we identify $\theta_M\coloneqq 2\pi$; recall also \eqref{order}). In fact, the question of the existence of nonsymmetric spirals has remained open, and \cite[Section~5]{EL} provided some evidence suggesting existence of such spirals. 

Before stating our main result, we emphasize that the existence of nonsymmetric spirals reduces to finding the angles $\Theta = (\theta_1 , \ldots , \theta_{M-1} ) \in \R^{M-1}$ such that \eqref{eq-disc} holds for some $a>0$, $\mu \in \R$, and $g_0, \ldots , g_{M-1} \in \R\setminus \{ 0\}$. 

\begin{theorem}[The main result]\label{thm_main}
Given $M\in \{ 2 , 3,  5 , 7 , 9 \} $ and $n\in \{1,2 \}$, let \eqnb\label{angles_possible}
\overline{\Theta }= \left( n \frac{\pi }{M} , 2n \frac{\pi }{M}, \ldots , (M-1) n \frac{\pi }{M}    \right)
 \eqne
and $\varepsilon >0$. Then there exists $a_0 > 0$ such that for every $a\geq a_0$ there exists a unique choice of 
\begin{equation}\label{sol_vars}
\Theta = (\theta_{1},\ldots,\theta_{M-1}) \in \R^{M-1}, \quad (g_{0},g_{1},\ldots,g_{M-1})\in \R^{M}, \quad \mu \in \R
\end{equation}
such that \eqref{eq-disc} holds and
\eqnb\label{halves} \left| \Theta  - \overline{\Theta }  \right|  \leq \varepsilon .
\eqne
In particular (as discussed above), then there exist weak solutions of the 2D incompressible Euler equations in the form of logarithmic spirals \eqref{spirals} with angles \eqref{halves}.
\end{theorem}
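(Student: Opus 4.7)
The plan is to recast \eqref{eq-disc} as a smooth finite-dimensional zero problem and apply the implicit function theorem in the large-$a$ regime. The counts balance: the $M$ complex equations \eqref{eq-disc} amount to $2M$ real equations in the $2M$ real unknowns $(\Theta, g, \mu) \in \R^{M-1}\times \R^M\times \R$. From \eqref{def_of_A} one computes the small-parameter expansion $A = -2i - 2/a + O(1/a^2)$, giving $e^{\pm\pi A} = 1 \mp 2\pi/a + O(1/a^2)$, $\cosh(\pi A) = 1 + O(1/a^2)$, and $\sinh(\pi A) = -2\pi/a + O(1/a^2)$. Consequently both sides of \eqref{eq-disc} are of order $1/a$ as $a\to \infty$, and after rescaling one obtains a map $F(\Theta, g, \mu; 1/a)$ that extends smoothly to the limit $1/a = 0$.

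To leading order the matrix reduces to $\mathcal{A}_0(\Theta)_{mk} = e^{-2i(\theta_k-\theta_m)}$, which factors as a rank-one outer product and is therefore singular for $M\ge 2$; this is why the $O(1)$ part of the equation is automatically satisfied and the genuine information lives at the next order. The correction $\mathcal{A}_1(\Theta)$ is read off from the expansion of the piecewise factors $e^{\pm\pi A}$, $\cosh(\pi A)$: its off-diagonal entries are proportional to $e^{-2i(\theta_k-\theta_m)}\bigl((\theta_k-\theta_m) + \pi\,\mathrm{sgn}(\theta_k-\theta_m)\bigr)$, and vanish on the diagonal. Combined with the corresponding expansion of the right-hand side $-\sinh(\pi A)(1+a^2-2\mu(1-ai))/(2a^2)$, this produces the desired map $F$.

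At the symmetric base point $\Theta = \overline\Theta$ the regular spacing $\overline\theta_k = kn\pi/M$ makes the relevant sums amenable to closed-form evaluation via finite geometric series and $M$-th (or $2M$-th) roots of unity; in this way I would exhibit an explicit candidate $(g^*, \mu^*)\in \R^M\times \R$ solving $F(\overline\Theta, g^*, \mu^*; 0) = 0$ with $g_k^* \ne 0$ for every $k$. The crucial step, and the main anticipated obstacle, is to verify that the Jacobian $D_{(\Theta, g, \mu)}F$ at $(\overline\Theta, g^*, \mu^*; 0)$ is nonsingular; this is precisely the ``non-degeneracy'' assumption referenced in the theorem. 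The resulting $2M\times 2M$ real matrix has entries that are trigonometric expressions in $n\pi/M$, so for each of $M\in\{2,3,5,7,9\}$ invertibility can be checked by a direct (symbolic) computation case by case, while for general odd $M$ it reduces to invertibility of the concrete gradient matrix for which only numerical evidence is available.

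Once the Jacobian is invertible at the base point, the implicit function theorem delivers a unique smooth branch $(\Theta(a), g(a), \mu(a))$ defined for all $1/a$ sufficiently small and close to $(\overline\Theta, g^*, \mu^*)$. Taking $a_0$ large enough in terms of $\varepsilon$ yields \eqref{halves}, and because $g_k^*\ne 0$ is an open condition, the components of $g(a)$ also remain nonzero for $a\ge a_0$, so that the branch genuinely parametrizes $M$-spiral weak solutions of \eqref{euler_intro}.
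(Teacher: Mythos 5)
Your reduction to a $2M\times 2M$ implicit\--function problem at $t=1/a=0$ breaks down at exactly the step you call crucial. At $t=0$ the $m$-th complex equation becomes $\ee^{2i\theta_m}\,S(\Theta,g)=0$, where $S(\Theta,g)=\sum_{k}\ee^{-2i\theta_k}g_k$: all $M$ components of the limiting map are scalar multiples of the \emph{same} complex number $S$, so the limit map factors through $\C\cong\R^2$ and its differential at any zero has rank at most $2$. Hence $D_{(\Theta,g,\mu)}F(\overline\Theta,g^*,\mu^*;0)$ is never invertible, for any base point. The rank-one structure of $\mathcal{A}_0$ does not make the $O(1)$ part ``automatically satisfied''; it turns it into the nontrivial two-real-dimensional constraint $S=0$, while the remaining $2M-2$ equations only appear at order $1/a$. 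If you instead multiply the system by $a$ (or divide by $\sinh(\pi A)$) to surface those equations, the rescaled map blows up off the set $\{S=0\}$ and does not extend to a $C^1$ map on a full neighbourhood in $\R^{2M}\times[0,\delta)$. Either way the hypotheses of the implicit function theorem fail. Repairing this requires a Lyapunov--Schmidt-type splitting, which is in effect what the paper does by a fully explicit elimination: it solves for $g'=(g_1,\dots,g_{M-1})/g_0$ by Cramer's rule, using the determinant identities $\det\B=\K\sinh^{M-2}(\pi A)$ and $\det\B_l=\H_l\sinh^{M-2}(\pi A)$ to show that the degeneration of $\A$ as $a\to\infty$ cancels in the ratios $\H_l/\K$; the implicit function theorem is then applied only to the $(M-1)$-dimensional system $\im H(a,\Theta)=0$ for $\Theta$, and $(g_0,\mu)$ are recovered afterwards from the first equation.

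A second ingredient missing from your plan is the solvability for $(g_0,\mu)$. After the correct reduction these two real unknowns are determined by one complex equation, which is solvable precisely when two specific complex numbers $E_1(a),E_2(a)$ are linearly independent over $\R$. For odd $M\ge 3$ the obstruction at leading order is the non-vanishing of the imaginary part of a quantity of the form $\alpha+\pi\beta$ with $\alpha,\beta$ algebraic, and the paper excludes the degenerate case by invoking the transcendence of $\pi$ (Proposition~\ref{prop-e_2_1_n_0}); this is not something a case-by-case symbolic computation would deliver, and it does not appear anywhere in your outline. (A minor point: the first-order correction to the off-diagonal entries of $\A$ carries the factor $(\theta_k-\theta_m)-\pi\,\mathrm{sgn}(\theta_k-\theta_m)$, not $+\pi\,\mathrm{sgn}(\theta_k-\theta_m)$.)
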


Since there are few known examples of time-dependent vortex sheet solutions to the 2D Euler equations \eqref{euler_intro}, the spirals provided by Theorem~\ref{thm_main} are of interest. Surprisingly, besides the use of robust analytical tools, the steps leading to Theorem~\ref{thm_main} (i.e. Steps 1--5 below) involve advanced matrix analysis (see Section~\ref{sec_dets} and Appendix~\ref{sec_app1}), combinatorial tools (see Proposition~\ref{prop_aa}), asymptotic expansions (see \eqref{expan1}--\eqref{expan2} and \eqref{asym-11aa}), as well as an application of the field of algebraic numbers and transcendentality of $\pi$ (see Proposition~\ref{prop-e_2_1_n_0}). 

 \begin{remark}
 We note that for $n\geq 3$ the claim of Theorem~\ref{thm_main} regarding solutions of the system \eqref{eq-disc} still holds. However, its interpretation as producing weak solutions of the Euler equations is no longer clear for such $n$, as the assumption $\Theta  \in (0, 2\pi )^{M-1}$ fails (recall \eqref{theta_2pi}). 
 \end{remark}

We also note that Theorem~\ref{thm_main} applies also to any odd integer $M\ge 3$ provided a certain non-degeneracy condition holds, which we discuss below.
\begin{theorem}\label{cor_suff_cond}
Given an odd integer $M \geq 3$ and $n\in \{1,2 \} $ let $\mathcal{C}$ be the matrix given by 
\eqnb\label{matrix-c}
\mathcal{C}_{lm}=\left\{\begin{aligned}
& 2\sin^{2}(\overline\theta_{1}) - (-1)^{m}\sin(2\overline\theta_{1})\sin(2 \overline\theta_{m}) && \text{ if } \quad l=m,\\[5pt]
& (-1)^{m+1}\sin(2 \overline\theta_{1})(\sin(2 \overline\theta_{m}) + (-1)^{l}\sin(2 \overline\theta_{l-m})) && \text{ if } \quad l<m,\\[5pt]
& (-1)^{m+1}\sin(2 \overline\theta_{1})(\sin(2 \overline\theta_{m}) - (-1)^{l}\sin(2 \overline\theta_{l-m})) && \text{ if } \quad m<l
\end{aligned}\right.
\eqne
for $1\le l,m \le M-1$, where $\overline\Theta = (\overline\theta_{1}, \overline\theta_{2},\ldots, \overline\theta_{M-1})$ is given by \eqref{angles_possible}. Then the assertions of Theorem~\ref{thm_main} remain valid provided the matrix $\mathcal{C}$ is invertible.
\end{theorem}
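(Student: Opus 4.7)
My plan is to follow the same strategy as the proof of Theorem~\ref{thm_main}: treat $a>0$ as a large parameter and apply the implicit function theorem to produce, for all sufficiently large $a$, a unique solution $(\Theta,g,\mu)$ of \eqref{eq-disc} with $\Theta$ close to $\overline\Theta$. The key observation enabling this is that the right-hand side of \eqref{eq-disc} is independent of the index $m$, so the $M$ complex equations decouple into one scalar equation (say, $m=0$) together with the $M-1$ difference equations
\[
\sum_{k=0}^{M-1}\bigl(\mathcal{A}_{mk}(a,\Theta)-\mathcal{A}_{0k}(a,\Theta)\bigr)g_{k}=0,\qquad m=1,\ldots,M-1,
\]
which do not involve $\mu$. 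Normalizing $g_0=1$, the difference equations form an $(M-1)\times(M-1)$ complex linear system for $g'=(g_1,\ldots,g_{M-1})$, hence $2(M-1)$ real equations for the $M-1$ real unknowns $g'$ and the $M-1$ real unknowns $\Theta$; the remaining scalar equation then determines $\mu$ and the overall scale of $g$.

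Next I would expand asymptotically as $a\to\infty$. From $A=-2ai/(a+i)=-2i+2/a+O(a^{-2})$ one has $\ee^{\pm\pi A}=1\mp 2\pi/a+O(a^{-2})$, $\cosh(\pi A)=1+O(a^{-2})$, and the factors $\ee^{A(\theta_k-\theta_m)}$ expand as $\ee^{-2i(\theta_k-\theta_m)}$ at leading order. Carrying these expansions through the difference equations and eliminating $g'$ yields a reduced real map $F(a,\Theta)\in\R^{M-1}$ whose zeros near $\overline\Theta$ parametrize the sought solutions. The core of the argument is to identify the Jacobian $D_{\Theta}F(a,\overline\Theta)$, up to invertible pre- and post-multiplications and an $O(a^{-1})$ correction, with the matrix $\mathcal{C}$ in \eqref{matrix-c}. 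The algebraic form of $\mathcal{C}$---alternating signs $(-1)^m$, the prefactor $\sin(2\overline\theta_1)$, and the pairings $\sin(2\overline\theta_m)\pm(-1)^{l}\sin(2\overline\theta_{l-m})$---is consistent with two features: differentiating $\mathcal{A}_{mk}$ in $\theta_l$ brings down a factor $A\approx-2i$, which turns the real parts of leading-order terms into sines of the doubled angles $2\overline\theta_k$; and the piecewise definition of $\mathcal{A}$ across $\theta_k=\theta_m$ produces the $(-1)^{l}$ signs and distinguishes the cases $l<m$ versus $m<l$.

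Once this identification is established, the invertibility of $\mathcal{C}$ immediately gives invertibility of $D_{\Theta}F(a,\overline\Theta)$ for all $a$ sufficiently large, and the implicit function theorem delivers the desired unique $\Theta=\Theta(a)$ near $\overline\Theta$. The remaining variables $g_k$ and $\mu$ are then recovered from the reductions above, with $g_k\ne 0$ and the ordering \eqref{order} automatic for large $a$ by continuity from their nonzero, ordered leading-order values at $\overline\Theta$. Containment of $\Theta$ in the open set $\U$ defined in \eqref{def_of_U} follows analogously for $n\in\{1,2\}$ since $\overline\Theta\in \U$.

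The main obstacle---and the technically heaviest part of the argument---is the explicit computation in the second paragraph that matches the linearization of the reduced equation with $\mathcal{C}$. This is where the asymptotic expansions, the combinatorics of the discontinuity pattern in \eqref{matrix-a}, and the careful tracking of real versus imaginary parts all intervene, mirroring the role flagged in the paragraph following Theorem~\ref{thm_main}. Once that identification is in hand, the hypothesis $M\in\{2,3,5,7,9\}$ of Theorem~\ref{thm_main} enters only through the direct verification that $\mathcal{C}$ is invertible for those specific $M$; replacing that verification with the standing assumption yields Theorem~\ref{cor_suff_cond} with no further changes.
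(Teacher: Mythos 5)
Your overall strategy is the paper's: subtract the $m=0$ equation to decouple $\mu$, normalize by $g_0$, solve the resulting $(M-1)\times(M-1)$ complex system for $g'$, impose that $\im g'=0$ as a real equation $F(a,\Theta)=0$ in $\Theta$, and apply the implicit function theorem at $a=\infty$, where the limiting Jacobian is (up to the scalar factor $\sin^{-2}(\overline\theta_1)$) exactly the matrix $\mathcal{C}$ of \eqref{matrix-c} — this is the content of \eqref{grad_Finfty}. Up to that point your proposal matches the paper, modulo the (admittedly heavy) computation identifying the Jacobian with $\mathcal{C}$, which you defer, and the technical point that the IFT must be applied to the $C^1$ extension of $F$ to $t=1/a=0$ (the paper's Step~2), since $F(a,\overline\Theta)$ is not zero for any finite $a$ and so the IFT in $\Theta$ at fixed large $a$ has no base point.

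The genuine gap is your treatment of the recovery of $g_0$ and $\mu$. You write that ``the remaining scalar equation then determines $\mu$ and the overall scale of $g$,'' but that scalar equation \eqref{eq-g-n} is \emph{one complex equation}, i.e.\ two real equations, for the two real unknowns $(g_0,\mu)$, and it is solvable (with $g_0\ne 0$) if and only if the coefficient vectors $E_1(a)$ and $E_2(a)$ of \eqref{eq-g-n_repeat1} are linearly independent over $\R$ as vectors in $\C$. The paper calls this the second main difficulty of the whole problem, and for odd $M\ge 3$ its resolution is not a continuity argument: it requires showing $\im E_{2,-1}\ne 0$ in the expansion $E_2(a)=E_{2,0}+E_{2,-1}a^{-1}+o(a^{-1})$ (note $E_{2,0}$ is real, so the leading orders alone do not separate $E_1$ from $E_2$), and this is proved by contradiction using the fact that all the relevant limiting quantities lie in $\pi\mathbb{A}$ or $\mathbb{A}$ (the algebraic numbers) together with the transcendence of $\pi$ (Proposition~\ref{prop-e_2_1_n_0}). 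Moreover, that transcendence argument itself uses the invertibility of $\mathcal{C}$ — equivalently \eqref{naF_is_inver} — to guarantee that the entries of $\nabla\overline{F}(\overline\Theta)^{-1}$, hence of $\Theta_{-1}/\pi$, are algebraic. So the hypothesis of the theorem enters in two distinct places, not just in the IFT step, and your proposal is missing the second one entirely.
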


The invertibility of $\mathcal{C}$ mentioned in the above theorem is in fact a nondegeneracy assumption of an argument based on the Implicit Function Theorem. The argument (which is sketched in Section~\ref{sec_sketch} below) gives the claim of Theorem~\ref{thm_main} as $\mathcal{C}$ is invertible for $M\in \{ 2,3,5,7,9 \}$, which we verify in Sections~\ref{sec_case2}--\ref{sec_case_mleq9} below. Numerical computations suggest that it is invertible  also for all odd $M>9$, although we do not provide a rigorous verification.

\begin{remark}
We consider angles $\overline{\Theta }= (\overline{\theta}_1 , \ldots , \overline{\theta}_{M-1})$ of the form \eqref{angles_possible} because of two properties which we use to show their non-degeneracy (see Step 3 and \eqref{naF_is_inver} below). First is that $\theta_k=k\theta_1 $ for each $k\in \{ 1, \ldots , M-1\}$, which we use in various computations in Section~\ref{sec_nondeg_theta} and Section~\ref{sec_case_Mgeq3}, and the second is that $M\theta_1=n\pi$, which we use to obtain \eqref{rr2}--\eqref{rr1}. We note that we do not claim that \eqref{angles_possible} is the only possible choice for which Theorem~\ref{thm_main} is valid.
\end{remark}

We note that, by taking $n=2$ the uniqueness claim of the theorem gives Alexander spirals \eqref{alex}, in which case $g_0=g_1=\ldots = g_{M-1}=g$ for some $g\ne 0$ and $\Theta = \overline{\Theta}$. Moreover, taking $n=1$ gives angles $\Theta \coloneqq \Theta(a)$ that are approximately equal to halves of the angles of Alexander spirals as $a\to \infty$, see Table~\ref{table_angles} below.

\begin{center}
\begin{tabular}{ |>{\centering}m{1cm}|>{\centering}m{3cm}|>{\centering}m{3cm}|>{\centering}m{3cm}|m{3cm}|} 
 \hline
  $M$ &   $2$ & $3$ & $5$ & \hspace{1.25cm} $7$  \\
  \hline
 Sketch & $\begin{array}{c}  \includegraphics[width=2cm]{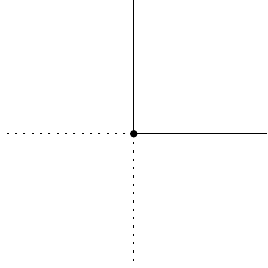}\\ n=1 \end{array}$ & $\begin{array}{c}  \includegraphics[width=2cm]{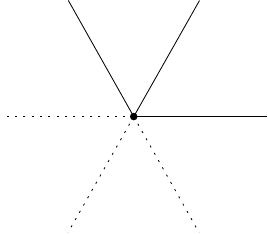}\\ n=1 \\
\includegraphics[width=2cm]{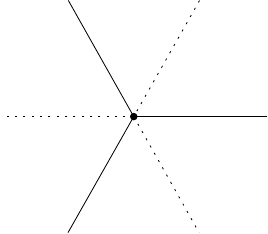}\\ n=2   \end{array}$ & $\begin{array}{c}  \includegraphics[width=2cm]{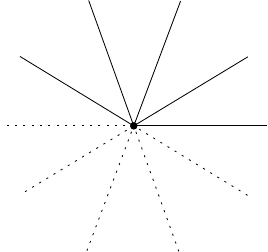}\\ n=1 \\
\includegraphics[width=2cm]{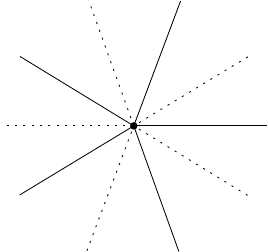}\\ n=2   \end{array}$ & \hspace{0.15cm} $\begin{array}{c}  \includegraphics[width=2cm]{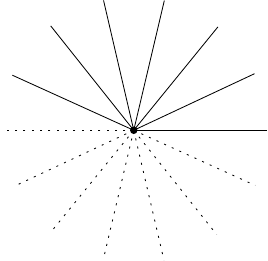}\\ n=1 \\
\includegraphics[width=2cm]{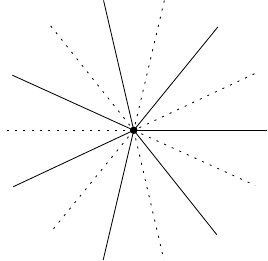}\\ n=2   \end{array} $ \\
  \hline
\end{tabular}
 \nopagebreak \vspace{5pt}
 \captionof{table}{A sketch of the angles of the spirals provided by Theorem~\ref{thm_main} for $M\leq 7$. Note that taking $n=2$ gives the Alexander spirals.}\label{table_angles} 
\end{center}

\begin{remark}\label{rem_nontrivial}
In the case $n=1$, the choice of the angles $\Theta=\Theta (a)$ and the weights $g_0=g_{0}(a), \ldots , g_{M-1}=g_{M-1}(a)$ in Theorem~\ref{thm_main} is nontrivial in the sense that $(\Theta , g_0,g_1,\ldots , g_{M-1}) \ne (\overline{\Theta} , g,\ldots , g ) $ for any $g\in \R\setminus \{ 0 \}$. In fact, for such $(\Theta , g_0,g_1,\ldots , g_{M-1})$ the left-hand side of \eqref{eq-disc} becomes a finite geometric series, which can be calculated explicitly, and gives different values for different $m\in \{ 0, \ldots , M-1 \}$ (see Appendix~\ref{sec_app2} for the details), which implies that \eqref{eq-disc} cannot hold.  

On the other hand, we will show that $g_1/g_0,\ldots , g_{M-1}/g_0$ do converge to $1$ as $a\to \infty$, see \eqref{expan2} below. Moreover, \eqref{def_of_-1} below provides asymptotic expansion of this convergence. In particular, since the value of  $g_0$ is chosen separately (see Step~5 below), the vector $(g_0,g_1, \ldots , g_{M-1} )$ has all entries approximately equal to $g_0$.
\end{remark}
\begin{remark}\label{rem_m_geq4}
We note that no nonsymmetric spirals can be obtained from Theorem~\ref{thm_main} for even $M>2$. Interestingly, this is a consequence of two separate arguments: the breakdown  of an admissibility criterion of $\overline{\Theta } =\overline{\theta}_1 (1, \ldots , M-1 ) $ (see \eqref{admissible} below), and the geometric constraint that $\overline{\theta}_1 = k\pi/2 $ for some $k\in \Z$, which in light of the ordering \eqref{order} implies that the resulting spiral must be symmetric. We discuss this issue in detail in Section~\ref{sec_m_geq4}. 
\end{remark}

In order to expose the main difficulty of Theorem~\ref{thm_main}, we note that \eqref{eq-disc} is equivalent to the vector equation 
\eqnb\label{matrix_eq_main}
\A g =  -\frac{ \sinh (\pi A)}{2a^2}\left( 1+a^2 -2\mu (1-ai ) \right) \begin{pmatrix}
1 \\ \vdots \\ 1 
\end{pmatrix},
\eqne
where we used \eqref{matrix-a}, which can be written in the matrix notation as 
\[
\A =  \begin{pmatrix}  
\frac{r+r^{-1}}2  & r \frac{r_1 }{r_2} & r \frac{r_1}{r_3} & \ldots & r \frac{r_1}{r_M} \\ r^{-1} \frac{r_2}{r_1} & \frac{r + r^{-1}}2 & r \frac{r_2}{r_3} &\ldots & r \frac{r_2}{r_M}  \\
r^{-1} \frac{r_3}{r_1} & r^{-1} \frac{r_3}{r_2} & \frac{r+r^{-1} }2 & \ldots & r \frac{r_3}{r_M} \\
\vdots & \vdots & \vdots & \ddots & \vdots \\
r^{-1} \frac{r_M}{r_1} & r^{-1} \frac{r_M}{r_2} &r^{-1} \frac{r_M}{r_3} &\ldots & \frac{r+r^{-1}}2
\end{pmatrix},
\]
with
\eqnb\label{def_r_rk}
r\coloneqq \ee^{-A\pi },\qquad r_k\coloneqq \ee^{-A\theta_{k-1}}, \quad k=1,\ldots , M.
\eqne
It can be verified that $\A$ is nonsingular for every $\Theta\in\mathcal{U}$ and $a>0$, see Appendix~\ref{sec_app1} for details. Therefore, at first sight, finding solutions $g\in (\R \setminus \{ 0 \} )^M$, $a>0$, $\mu \in \R$,  of \eqref{matrix_eq_main} might seem immediate: given any $a,\Theta, \mu$ let $g=(g_0, \ldots , g_{M-1})$ be the solution of the linear system \eqref{matrix_eq_main}. The problem is that such approach only guarantees that $g\in \C^{M}$, while we must have that $g\in (\R\setminus\{0 \})^{M}$. \\

This is in fact the main difficulty of finding solutions to \eqref{matrix_eq_main}.\\

One could expect that a more careful choice of $a$, $\Theta$ and $\mu$ could result in $g\in (\R\setminus\{0 \})^{M}$, and the main purpose of this paper is to provide a family of nontrivial examples of such choices. Namely, we present a construction which results in $\Theta$ approximately equal to $\overline{\Theta }$ (given by \eqref{angles_possible}, for any fixed $n\in \{ 1,2 \} $).

To this end we observe that dividing \eqref{matrix_eq_main} by $g_0$ and subtracting the first equation of \eqref{matrix_eq_main} from all the remaining $M-1$ equations gives
\eqnb\label{system_for_g'}
\B g' = -b,
\eqne
where $g'\coloneqq (g_1, \ldots , g_M )/g_0$, $b\in \C^{M-1}$ is the vector given by 
\begin{align*}
b:=\left( \frac{r^{-1}r_2-cr_1}{r_1},  \frac{r^{-1}r_3-cr_1}{r_1},  \ldots,  \frac{r^{-1}r_M-cr_1}{r_1} \right) ,
\end{align*}
and $\B$ is the bottom-right $(M-1)\times (M-1)$ submatrix of 
\eqnb\label{oA}
\mathcal{A}_{0} \coloneqq 
\begin{pmatrix}
1 & 0  & \ldots & 0\\
-1 & 1 & \ldots & 0\\
-1& 0 & \ldots & 0\\
\vdots & \vdots & \ddots &\vdots \\
-1 &0 & \ldots & 0
\end{pmatrix} \A = \begin{pmatrix}  
\frac{r+r^{-1}}2  & r \frac{r_1 }{r_2} & r \frac{r_1}{r_3}  & \ldots & r \frac{r_1}{r_M} \\
\frac{r^{-1} r_2-c r_1}{r_1}&  \frac{cr_2-rr_1}{r_2} & r\frac{r_2-r_1}{r_3}  &\ldots &  r\frac{r_2-r_1}{r_M}  \\
\frac{r^{-1} r_3-c r_1}{r_1}& \frac{r^{-1}r_3-rr_1}{r_2} & \frac{cr_3 - rr_1 }{r_3}& \ldots & r \frac{r_3-r_1}{r_M}\\
\vdots & \vdots &  \vdots &\ddots & \vdots \\
 \frac{r^{-1} r_M-c r_1}{r_1}& \frac{r^{-1}r_M-rr_1}{r_2} & \frac{r^{-1}r_M-rr_1}{r_3} &\ldots & \frac{cr_M-rr_1}{r_M}
\end{pmatrix},
\eqne
where we used the notation  
\[
c\coloneqq \frac{r^{-1}+r}{2} = \cosh (\pi A).
\]

The first main difficulty of finding solutions \eqref{sol_vars} to \eqref{matrix_eq_main} is to find $a$ and $\Theta (a)$ such that \eqref{system_for_g'} has a unique solution $g'\in (\R \setminus \{ 0 \} )^{M-1}$. Suppose for the moment that we have succeeded in finding such $a$ and $\Theta(a)$. Then \eqref{matrix_eq_main} will be satisfied if $g_0\in \R\setminus \{ 0\}$ and $\mu \in \R$ are chosen so that the first equation of \eqref{matrix_eq_main} holds, that is 
\begin{equation}\label{eq-g-n}
g_0\underbrace{\left( \frac{ac}{s} + \frac{ar}{s} \left( \frac{g'_1}{r_2} + \ldots + \frac{g'_{M-1}}{r_M} \right) \right) }_{=:D} = \mu\left(i - \frac{1}{a}\right) + \frac{1+a^{2}}{2a},
\end{equation}
where we have set $s\coloneqq (r-r^{-1})/2=-\sinh (\pi A)$. This is the second main difficulty of the problem. In fact, such $g_0,\mu$ exist if and only if $D$ and $i - \frac{1}{a}$ are linearly independent (as vectors in the complex plane). Then representing $(1+a^2)/2a^2$ as a linear combination of $D$ and $i - \frac{1}{a}$ gives the unique choice of $g_0,\mu \in \R$. In such case note that we must have $g_0\ne 0$ (as required), since $\frac{1+a^2}{2a}$ and  $i - \frac{1}{a}$ are linearly independent.\\

\subsection{Proof of Theorem~\ref{thm_main}}\label{sec_sketch}

In this section we describe the above idea in details, proving Theorem~\ref{thm_main}, except for claims which are verified in detail in Sections~\ref{sec_dets}--\ref{sec_choice_g_mu} below.

\noindent\texttt{Step 1.} We use the Cramer method to deduce that the solution $g'$ of \eqref{system_for_g'} is
\begin{equation}\label{def_of_H}
H (a,\Theta ) :=\frac{1}{\K (a,\Theta ) } \left( \H_1 (a,\Theta ) , \H_2 (a,\Theta ) , \ldots , \H_{M-1} (a,\Theta ) \right) 
\end{equation}
for all sufficiently large $a >0$, and all $\Theta =(\theta_1, \ldots , \theta_{M-1} )\in \R^{M-1}$ sufficiently close to $\overline{\Theta}$ (recall \eqref{angles_possible}), where
\eqnb\label{def_of_K,Hl}
\begin{split}
\K (a,\Theta) &\coloneqq \frac{\ee^{\pi A} + (-1)^{M}\ee^{-\pi A }}{2} + (-1)^{M}\sum_{k=1}^{M-1}(-1)^{k}\ee^{A(\theta_{k}-\pi)}, \\
\H_l (a,\Theta ) &\coloneqq \frac{ \ee^{\pi A} - (-1)^{M}\ee^{-\pi A} }{2} 
+\sum_{k=0}^{l-1}(-1)^{l+k}\ee^{A(\theta_{k}-\theta_{l}+\pi)} +(-1)^{M}\sum_{k=l}^{M-1}(-1)^{l+k}\ee^{A(\theta_{k}-\theta_{l}-\pi)}.\\
\end{split}
\eqne

To this end we let $\B_l$ ($l=1,\ldots , M-1$) denote $\B$ with the $l$-th column replaced by $-b$, that is
\eqnb\label{def_Bl}
\B_l \coloneqq \left(  \begin{array}{ccccc|c|ccc}
 \frac{cr_2-rr_1}{r_2} & \frac{rr_2-rr_1}{r_3}  & \frac{rr_2-rr_1}{r_4}  &\ldots &\frac{rr_2-rr_1}{r_l}& -\frac{r^{-1}r_2-cr_1}{r_1}&\frac{rr_2-rr_1}{r_{l+2}} & \ldots & \frac{rr_2-rr_1}{r_M}  \\
 \frac{r^{-1}r_3-rr_1}{r_2} & \frac{cr_3 - rr_1 }{r_3} & \frac{rr_3-rr_1}{r_4} & \ldots &\frac{rr_3-rr_1}{r_l}& -\frac{r^{-1}r_3-cr_1}{r_1}&\frac{rr_3-rr_1}{r_{l+2}} & \ldots &  \frac{rr_3-rr_1}{r_M} \\
 \frac{r^{-1}r_4-rr_1}{r_2} & \frac{r^{-1} r_4 - rr_1 }{r_3} & \frac{cr_4-rr_1}{r_4} & \ldots &\frac{rr_4-rr_1}{r_l}& -\frac{r^{-1}r_4-cr_1}{r_1}&\frac{rr_4-rr_1}{r_{l+2}}& \ldots &  \frac{rr_4-rr_1}{r_M} \\
 \vdots & \vdots & \vdots &\ddots & \vdots &\vdots &\vdots & \ddots & \vdots \\
 \frac{r^{-1}r_M-rr_1}{r_2} & \frac{r^{-1}r_M-rr_1}{r_3}& \frac{r^{-1}r_M-rr_1}{r_4} &\ldots &\frac{rr_M-rr_1}{r_l}& -\frac{r^{-1}r_M-cr_1}{r_1}&\frac{r^{-1}r_M-rr_1}{r_{l+2}}& \ldots & \frac{cr_M-rr_1}{r_M}
 \end{array} \right),
\eqne
where the column inside the vertical lines is the $l$-th column. We emphasize that each of $\A$, $\mathcal{A}_{0}$, $\B$, $\B_l$, $r$, $r_k$, $c$ depends on $(a,\Theta )$.  We prove in Propositions~\ref{prop_B}~and~\ref{prop_Bl} that
\eqnb\label{toshow_dets}
\begin{split}  
\det \B &= \K \sinh^{M-2}(\pi A), \\
\det \B_l &= \H_l \sinh^{M-2}(\pi A) .
\end{split}
\eqne
We let 
\begin{align}\label{eq-l-1b}
&\overline{\K}(\Theta ) \coloneqq \lim_{a\to \infty } \K (a, \Theta ) = \frac{1+(-1)^M}{2}+(-1)^M\sum_{k=1}^{M-1}(-1)^{k}\ee^{-2i\theta_k}, \\ \label{eq-l-2b}
&\overline{\H}_l(\Theta ) \coloneqq \lim_{a\to \infty } \H_l (a, \Theta ) = \frac{1-(-1)^M }{2}
+ \sum_{k=0}^{l-1}(-1)^{l+k}\ee^{2i(\theta_l-\theta_k)}+(-1)^M\sum_{k=l}^{M-1}(-1)^{l+k}\ee^{2i(\theta_l-\theta_{k})},
\end{align}
and we observe that the above limits, as $a\to \infty $, are uniform with respect to $\Theta \in \U$ on each compact subset of $\U$ (recall \eqref{def_of_U}).  We define the set of admissible angles as
\[
\D\coloneqq \{\Theta\in \mathcal{U} \colon   \overline{\K} (\Theta)\neq 0\},
\]
and we let
\begin{equation*}
\overline{H}(\Theta)\coloneqq  \frac{1}{\overline{\K} (\Theta)}\left( \overline{\H}_1 (\Theta), \overline{\H}_2 (\Theta), \ldots, \overline{\H}_{M-1} (\Theta) \right),\quad \Theta\in\mathcal{D}.
\end{equation*}
Thus, since $\D\subset \R^{M-1}$ is open, we see that
\eqnb\label{convergence}
H(a,\Theta ) \to \overline{H} (\Theta )\qquad \text{ as } a\to \infty,
\eqne
uniformly with respect to $\Theta$ on compact subsets of $\D$ (recall that $A= -2ai/(a+i) \to -2i$ as $a\to \infty$).
We will use the notation
\[\begin{split}
F(a,\Theta ) &\coloneqq \im\, H(a,\Theta ),\hspace{3cm} G(a,\Theta ) \coloneqq \re\, H(a,\Theta ),  \\
\overline{F} (\Theta ) &\coloneqq \im\, \overline{H} (\Theta ), \hspace{3.76cm} \overline{G} (\Theta ) \coloneqq \re\, \overline{H} (\Theta ).
\end{split}
\]
Note that, for brevity, we use the convention that a bar above a quantity denotes its limiting value as $a\to \infty$, e.g. $\overline{\Theta } = \lim_{a\to \infty } \Theta (a)$.

Hence, since $\overline{\Theta } = \frac{n\pi }{M} \left( 1, 2,\ldots, M-1 \right)$ (recall \eqref{angles_possible}) is admissible, that is 
\eqnb\label{admissible}
\overline{\Theta } \in \D, 
\eqne
which we prove in Section~\ref{sec_nondeg_theta} below, we obtain that ${\K} (a,\Theta )\ne 0$ for sufficiently large $a>0$ and $\Theta \in \D$ sufficiently close to $\overline{\Theta }$. Therefore, for such $a,\Theta$, we see that $\B $ is invertible and $H(a,\Theta )$ is the unique solution of \eqref{system_for_g'}, due to the Cramer method.\\

\noindent\texttt{Step 2.} We show in Section~\ref{sec_conv_claims} that the limit \eqref{convergence} is stronger in the sense that for every closed ball $D(\overline\Theta,\varepsilon)\subset \D$ there exists $\delta>0$ such that the function $\widetilde{F} \colon [0,\delta ] \times B(\overline\Theta, \varepsilon ) \to \R^{M-1}$,
\eqnb\label{def_of_tildeF}
\widetilde{F}(t, \Theta ) \coloneqq \begin{cases}
F(1/t, \Theta ) \qquad &\text{ for }t\in (0,\delta ],\\
\overline{F}(\Theta ) &\text{ for } t=0,
\end{cases}
\eqne
is a $C^1$ map. To this end we prove (in Section~\ref{sec_conv_claims} below) that the limit 
\eqnb\label{R+iI}
R(\Theta )+iI (\Theta )\coloneqq \lim_{a\to \infty } \left[ - a^2 \p_a H (a,\Theta ) \right]
\eqne
exists and is uniform with respect to $\Theta $ on every compact subset of $\D$. We also show that
\eqnb\label{limit_at_angles}
(R+iI) (\overline{\Theta }) = 
-i\pi \frac{((-1)^{l} \ee^{2il\theta_1} -1 )(1+\ee^{2i\theta_1 })}{\sin (2 \theta_1 ) }  
\eqne
for odd $M\geq 3$ and each $l\in\{1,\ldots, M-1\}$.

\noindent\texttt{Step 3.} We show that the choice of the angles $\overline{\Theta }$ is nondegenerate, that is
\eqnb\label{h_is_1}
\overline{H} (\overline{\Theta }) = (1,1,\ldots , 1)
\eqne
and
\eqnb\label{naF_is_inver}
\nabla \overline{ F } (\overline{\Theta }) \text{ is an invertible matrix }
\eqne
for $M\in \{ 2,3,5,7,9\}$, where $\overline{\Theta }$ denotes the vector of possible angles given by \eqref{angles_possible}, see Section~\ref{sec_nondeg_theta} for details. 

\noindent\texttt{Step 4.}  We  prove the existence of $a_0 \in \R$ and a $C^1$ map $\Theta \colon [a_0,\infty ) \to \D $ such that 
\begin{equation*}
\left| \Theta(a) - \overline{\Theta } \right| \leq \varepsilon
\end{equation*}
and $F (a,\Theta (a))=0$ for all $a\geq a_0$. Note that this step proves that the solution $g'=G(a,\Theta (a))$ of \eqref{system_for_g'} is real for each $a\geq a_0$.

To this end, we note that \eqref{h_is_1} shows that $\widetilde{F}(0,\overline{\Theta } )=0$, and so noting \eqref{naF_is_inver} we can use the Implicit Function Theorem to obtain $t_0>0$ and a $C^{1}$ function $\widetilde\Theta:[0,t_{0})\to \R^{M-1}$ such that  
\[ 
\widetilde F(t, \widetilde\Theta(t)) = 0,\quad t\in(0,t_{0})
\] 
and $\widetilde\Theta(0) =\overline{\Theta }$. Letting $a_0\coloneqq 1/t_0$ and $\Theta(a)\coloneqq  \widetilde \Theta(1/a)$ we obtain the claim of this step.

Moreover, we obtain asymptotic expansions as $a\to \infty$, 
\begin{align}
\Theta(a) &= \overline{\Theta}   +\Theta_{-1} a^{-1} + o(a^{-1}), \label{expan1}\\
G(a,\Theta(a)) &=  \overline{G} (\overline{\Theta })  +  G_{-1} a^{-1} + o(a^{-1}), \label{expan2}
\end{align}
as  $a\to \infty$, where
\eqnb\label{def_of_-1}
\begin{split}
\Theta_{-1} &\coloneqq -\nabla \overline{F}(\overline{\Theta })^{-1} I (\overline{\Theta }),\\
G_{-1} &\coloneqq R (\overline{\Theta }) +  \nabla \overline{G}(\overline{\Theta } )\Theta_{-1},
\end{split}
\eqne
which we prove in Section~\ref{sec_expans}. We also note that $\overline{G}(\overline{\Theta }) = (1,\ldots , 1)$, a consequence of \eqref{h_is_1}.

\noindent\texttt{Step 5.} We show that if the  the nondegeneracy condition \eqref{naF_is_inver} holds then  \eqref{limit_at_angles} can be used to prove existence of unique $g_0 \in \R\setminus \{ 0 \}$ and $\mu \in \R$ such that \eqref{matrix_eq_main} holds for $\Theta = \Theta (a)$ and every $a\geq a_0$. (Note that this step proves Theorems~\ref{thm_main} and \ref{cor_suff_cond}, as required.)

The pair $(g_0,\mu )$ can be determined using \eqref{eq-g-n}, namely
\eqnb\label{eq-g-n_repeat}
-g_0 \frac{a}{\sinh (\pi A) }  \left( \cosh (\pi A) + \sum_{l=1}^{M-1} g_l'(a) \ee^{A(\theta_l (a)-\pi )} \right) = \mu\left(i - \frac{1}{a}\right) + \frac{1+a^{2}}{2a},
\eqne
where
\eqnb\label{def_of_gl'}
g_l'(a) = G(a,\Theta(a) ) = \frac{\H_l (a , \Theta (a))}{\K (a, \Theta (a) )},
\eqne
due to Step 1. Substituting \eqref{def_of_gl'} into \eqref{eq-g-n_repeat} and multiplying both sides of the resulting equation by $\K(a,\Theta (a))$ gives
\eqnb\label{eq-g-n_repeat1}
\begin{split}
&-\frac{1+a^2}{2a} \K (a, \Theta (a)) \\
&= g_0 \underbrace{\frac{ a }{\sinh (\pi A)} \left( \cosh (\pi A) \K (a, \Theta (a)) + \sum_{l=1}^{M-1} \H_l (a , \Theta (a) ) \ee^{A(\theta_l (a)-\pi )} \right)}_{=: E_1 (a) } + \mu \underbrace{\left(i-\frac1a\right) \K (a, \Theta (a) )}_{=: E_2 (a)}
\end{split}
\eqne

Thus it suffices to show that $E_1 (a)$, $E_2(a)$ are linearly independent (as vectors in the complex plane $\C$), which can be shown directly in the case $M=2$ (see Section~\ref{sec_caseM2}), while in the case of  odd $M\geq 3$ (Section~\ref{sec_case_Mgeq3}) it can be proved, provided  invertibility \eqref{naF_is_inver} of $\nabla \overline{F} (\overline{\Theta})$, by applying   \eqref{limit_at_angles}  to expand $E_1(a)$ and $E_2(a)$ at $a\to \infty$ and then recalling some properties of algebraic numbers as well as using the fact that $\pi$ is a transcendental number.\\

The structure of the article is as follows. In Section~\ref{sec_dets} we prove the formulas \eqref{toshow_dets} for the determinants of $\B$ and $\B_l$, $l=1,\ldots , M-1$. We then study the convergence properties as $a\to \infty$, as mentioned in Step~2 above, in Section~\ref{sec_conv_claims}. Section~\ref{sec_nondeg_theta} is devoted to study nondegeneracy properties of the angles \eqref{angles_possible}, as mentioned in Step~1 and Step~3 above, and we verify the asymptotic expansions \eqref{expan1}--\eqref{expan2} in Section~\ref{sec_expans}. Finally, we prove linear independence of $E_1(a),E_2(a) \in \C$, as mentioned in Step~5 above, in Section~\ref{sec_choice_g_mu}.

\section{The determinants}\label{sec_dets}

In this section we prove \eqref{toshow_dets}. Namely, we find $\det \B$ and $\det \B_l$, $l=1,\ldots , M-1$. To this end, we recall from \eqref{oA} and \eqref{def_r_rk} that 
\[
\B \coloneqq \begin{pmatrix}  
 \frac{cr_2-rr_1}{r_2} & r\frac{r_2-r_1}{r_3}  & r\frac{r_2-r_1}{r_4}  &\ldots &  r\frac{r_2-r_1}{r_M}  \\
 \frac{r^{-1}r_3-rr_1}{r_2} & \frac{cr_3 - rr_1 }{r_3} & r\frac{r_3-r_1}{r_4} & \ldots & r \frac{r_3-r_1}{r_M} \\
 \frac{r^{-1}r_4-rr_1}{r_2} & \frac{r^{-1} r_4 - rr_1 }{r_3} & \frac{cr_4-rr_1}{r_4} & \ldots & r \frac{r_4-r_1}{r_M} \\
\vdots & \vdots & \vdots &\ddots & \vdots \\
  \frac{r^{-1}r_M-rr_1}{r_2} & \frac{r^{-1}r_M-rr_1}{r_3}& \frac{r^{-1}r_M-rr_1}{r_4} &\ldots & \frac{cr_M-rr_1}{r_M}
\end{pmatrix},
\]
where $r\coloneqq \ee^{-A\pi }$, $r_k\coloneqq \ee^{-A\theta_{k-1}}$ for $k=1,\ldots ,M$, and we recall \eqref{def_Bl} for the definition of $\B_l$. We first find $\det \B$.

\begin{proposition}\label{prop_B}
If $M\ge 2$, $a>0$ and $\Theta=(\theta_{1},\ldots,\theta_{M-1})\in\R^{M-1}$, then 
\[
\det\mathcal{B}(a,\Theta)= \sinh^{M-2}(\pi A)\left( \frac{\ee^{\pi A} +(-1)^M\ee^{-\pi A }}{2} + (-1)^{M}\sum_{k=1}^{M-1}(-1)^{k}\ee^{A(\theta_{k}-\pi)} \right).
 \]
\end{proposition}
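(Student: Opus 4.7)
The plan is to reduce $\det\mathcal{B}$ to an explicit alternating sum via one column rescaling and one sweep of column operations. First I would multiply column $j$ of $\mathcal{B}$ by $r_{j+1}$ to form $\widetilde{\mathcal{B}} := \mathcal{B}\,\mathrm{diag}(r_2,\ldots,r_M)$, so that every entry in row $i$ becomes one of three values: $r^{-1}r_{i+1}-rr_1$ for columns $j<i$, $cr_{i+1}-rr_1$ for $j=i$, and $rr_{i+1}-rr_1$ for $j>i$. The key observation is that the diagonal entry is the arithmetic mean of the two off-diagonal values, a direct consequence of $c=(r+r^{-1})/2$.

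Next I would perform the column operations $C_j\mapsto C_j-C_{j+1}$ for $j=1,\ldots,M-2$, carried out in increasing order of $j$ so that each difference involves the original columns. Because the off-diagonal values are constant along each row, these telescoping differences vanish except at the two columns straddling the diagonal, both of which reduce to $(r^{-1}-c)r_{i+1}=(c-r)r_{i+1}=-s\,r_{i+1}=\sinh(\pi A)\,r_{i+1}$. Thus the transformed matrix $\widetilde{\mathcal{B}}'$ has in its first $M-2$ columns only two nonzero entries per column, namely $\sinh(\pi A)\,r_{j+1}$ in row $j$ and $\sinh(\pi A)\,r_{j+2}$ in row $j+1$, while the last column is unchanged, with entries $rr_{i+1}-rr_1$ for $i<M-1$ and $cr_M-rr_1$ for $i=M-1$.

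Laplace expansion along the last column now completes the calculation. Deleting row $i$ decouples the first $M-2$ columns into a pair of triangular blocks: an $(i-1)\times(i-1)$ lower-bidiagonal top-left block with diagonal entries $\sinh(\pi A)r_2,\ldots,\sinh(\pi A)r_i$ and an $(M-1-i)\times(M-1-i)$ upper-bidiagonal bottom-right block with diagonal entries $\sinh(\pi A)r_{i+2},\ldots,\sinh(\pi A)r_M$ (either block being empty at the endpoints $i=1$ and $i=M-1$). Consequently each minor evaluates to $\sinh^{M-2}(\pi A)\prod_{k=2,\,k\neq i+1}^{M}r_k$, and dividing by the overall rescaling factor $\prod_{k=2}^M r_k$ (recalling $r_1=1$ and $r/r_{i+1}=\ee^{A(\theta_i-\pi)}$) yields
\[
\det\mathcal{B}=\sinh^{M-2}(\pi A)\sum_{i=1}^{M-1}(-1)^{i+M-1}u_i,
\]
where $u_i := r-\ee^{A(\theta_i-\pi)}$ for $i<M-1$ and $u_{M-1}:=c-\ee^{A(\theta_{M-1}-\pi)}$.

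Finally, splitting $u_{M-1} = (r-\ee^{A(\theta_{M-1}-\pi)})+(c-r)$, applying the alternating-sum identity $\sum_{i=1}^{M-1}(-1)^{i}=\tfrac12((-1)^{M-1}-1)$, and absorbing the constant piece via $c-r = \tfrac12(r^{-1}-r)$ produces exactly $\tfrac12(\ee^{\pi A}+(-1)^M\ee^{-\pi A}) + (-1)^M\sum_{k=1}^{M-1}(-1)^k\ee^{A(\theta_k-\pi)}$, which is $\K(a,\Theta)$. The main bookkeeping pitfall lies in tracking the block structure of the minors (and the $(-1)^{i+M-1}$ Laplace signs) correctly at the boundary cases $i=1$ and $i=M-1$ where one of the two blocks is empty; I would therefore cross-check the formula against explicit $M=2$ and $M=3$ computations at the outset to anchor signs and indices before writing the general argument.
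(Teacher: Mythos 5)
Your argument is correct; I checked the column operations, the bidiagonal block structure of the minors, the Laplace signs, and the final alternating-sum bookkeeping (including the boundary cases $M=2,3$), and everything lands on $\K(a,\Theta)\sinh^{M-2}(\pi A)$ as required. The route is genuinely different from the paper's. The paper performs a sweep of elementary column operations that brings $\B$ to a lower-Hessenberg form $\B'''$ (nonzero entries only on and below the diagonal plus one superdiagonal), then expands along the last row to get a two-term recursion $\det\B'''_{M-1}=(-1)^M s^{M-2}\bigl(r^{-1}-r\tfrac{r_1}{r_M}\bigr)+s\det\B'''_{M-2}$, which it solves by induction on $M$. You instead exploit the observation that, after rescaling column $j$ by $r_{j+1}$, each row of $\B$ is constant off the diagonal on either side, with the diagonal equal to the mean of the two constants; a single sweep of adjacent-column differences then kills everything except a bidiagonal band in the first $M-2$ columns, and one Laplace expansion along the untouched last column produces minors that factor into two triangular blocks with explicitly known diagonals. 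This gives a closed-form, induction-free evaluation, which is arguably cleaner for this proposition; the paper's choice has the advantage that the intermediate objects $\B'''_{k}$ and their determinants \eqref{det_B_M-1} are reused heavily in the proof of Proposition~\ref{prop_Bl} for $\det\B_l$, whereas your normalization would have to be redone there. Two small points worth making explicit if you write this up: (i) you should note that the two off-diagonal blocks of each minor vanish (your column $j$ touches only rows $j$ and $j+1$, so this is immediate, but it is what justifies the product formula for the minors), and (ii) the identity $u_i=r-\ee^{A(\theta_i-\pi)}$ uses $r_1=\ee^{-A\theta_0}=1$, i.e.\ the standing convention $\theta_0=0$, which is also implicitly used in the statement of the proposition.
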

\begin{proof}
Note that
\[
\B \begin{pmatrix}  
1 & -\frac{r_2}{r_3} & -\frac{r_2}{r_4}   & \ldots & -\frac{r_2}{r_M} \\ 0 & 1 & 0 &\ldots & 0   \\
0   &0  &1  & \ldots & 0  \\
\vdots & \vdots & \vdots & \ddots & \vdots  \\
0 &0 & 0 & \ldots & 0  \\
0  & 0 & 0  &\ldots & 1 \end{pmatrix} =\begin{pmatrix}  
 \frac{cr_2-rr_1}{r_2} & s\frac{r_2}{r_3}  & s\frac{r_2}{r_4}  &\ldots &  s\frac{r_2}{r_M}  \\
 \frac{r^{-1}r_3-rr_1}{r_2} & s  & 2s\frac{r_3}{r_4} & \ldots &  2s\frac{r_3}{r_M} \\
 \frac{r^{-1}r_4-rr_1}{r_2} & 0 & s  & \ldots &  2s\frac{r_4}{r_M} \\
\vdots & \vdots & \vdots &\ddots & \vdots \\
  \frac{r^{-1}r_{M-1}-rr_1}{r_2} & 0 & 0  &\ldots & 2s\frac{r_{M-1}}{r_M} \\
  \frac{r^{-1}r_M-rr_1}{r_2} & 0 & 0  &\ldots & s 
\end{pmatrix}=:\B',
\]
where we recalled that $s\coloneqq (r-r^{-1})/2=-\sinh (\pi A)$. Moreover
\[
\B'\begin{pmatrix}  
1 & 0 & 0   & \ldots & 0 \\ 
0 & 1 & -\frac{r_3}{r_4} &\ldots & -\frac{r_3}{r_M}   \\
0  &0  &1  & \ldots & 0  \\
\vdots & \vdots & \vdots & \ddots & \vdots  \\
0 &0 & 0 & \ldots & 0  \\
0  & 0 & 0  &\ldots & 1 \end{pmatrix} =\begin{pmatrix}  
 \frac{cr_2-rr_1}{r_2} & s\frac{r_2}{r_3}  &0 &0 &\ldots & 0  \\
 \frac{r^{-1}r_3-rr_1}{r_2} & s  & s\frac{r_3}{r_4} & s\frac{r_3}{r_5}&\ldots &  s\frac{r_3}{r_M} \\
 \frac{r^{-1}r_4-rr_1}{r_2} & 0 & s  &2s\frac{r_4}{r_5}& \ldots &  2s\frac{ r_4}{r_M} \\
 \frac{r^{-1}r_5-rr_1}{r_2} & 0 & 0  &s& \ldots &  2s\frac{ r_5}{r_M} \\
\vdots & \vdots &\vdots & \vdots &\ddots & \vdots \\
  \frac{r^{-1}r_{M-1}-rr_1}{r_2} & 0 & 0 &0 &\ldots &2s \frac{r_{M-1}}{r_M} \\
  \frac{r^{-1}r_M-rr_1}{r_2} & 0 & 0 &0 &\ldots & s 
\end{pmatrix}=:\B''.
\]
Continuing we obtain
\begin{align*}
&\B''\begin{pmatrix}  
1&0 & 0 & 0   & \ldots & 0 \\ 
0&1 & 0 & 0   & \ldots & 0 \\ 
0&0 & 1 & -\frac{r_4}{r_5} &\ldots & -\frac{r_4}{r_M}   \\
0&0  &0  &1  & \ldots & 0  \\
\vdots &\vdots & \vdots & \vdots & \ddots & \vdots  \\
0&0 &0 & 0 & \ldots & 0  \\
0&0  & 0 & 0  &\ldots & 1 \end{pmatrix}\ldots \begin{pmatrix}  
1&0 & 0 & 0   & \ldots & 0 \\ 
0&1 & 0 & 0   & \ldots & 0 \\ 
0&0 & 1 & 0 &\ldots & 0   \\
0&0  &0  &1  & \ldots & 0  \\
\vdots &\vdots & \vdots & \vdots & \ddots & \vdots  \\
0&0 &0 & 0 & \ldots & -\frac{r_{M-1}}{r_M}  \\
0&0  & 0 & 0  &\ldots & 1 \end{pmatrix} \\[10pt]
&\qquad =\begin{pmatrix}  
 \frac{cr_2-rr_1}{r_2} & s\frac{r_2}{r_3}  &0 &0 &\ldots & 0  \\
 \frac{r^{-1}r_3-rr_1}{r_2} & s  & s\frac{r_3}{r_4} & 0&\ldots & 0 \\
 \frac{r^{-1}r_4-rr_1}{r_2} & 0 & s  &s\frac{r_4}{r_5}& \ldots &  0 \\
 \frac{r^{-1}r_5-rr_1}{r_2} & 0 & 0  &s& \ldots &  0\\
\vdots & \vdots &\vdots & \vdots &\ddots & \vdots \\
  \frac{r^{-1}r_{M-1}-rr_1}{r_2} & 0 & 0 &0 &\ldots &s \frac{r_{M-1}}{r_M} \\
  \frac{r^{-1}r_M-rr_1}{r_2} & 0 & 0 &0 &\ldots & s 
\end{pmatrix} =:\B'''.
\end{align*}
We will now write $\B\equiv \B_{M-1}$ and $\B'''\equiv \B_{M-1}'''$ to emphasize the size of the matrix. Using Laplace's expansion with respect to the last row we obtain
\[
\det \B_{M-1} = \det \B'''_{M-1} = (-1)^M s^{M-2} \left( r^{-1} - r \frac{r_1}{r_M} \right) + s \det \B'''_{M-2}.
\]
Thus, since
\[
\det \B'''_2 = \det \begin{pmatrix}
\frac{cr_2-rr_1}{r_2} & s\frac{r_2}{r_3} \\
\frac{r^{-1} r_3 - rr_1}{r_2} & s
\end{pmatrix} = s^2 + sr \left( \frac{r_1}{r_3} - \frac{r_1}{r_2} \right),
\]
we obtain
\eqnb\label{det_B_M-1}
\begin{split}\det \B_{M-1}''' &= s^{M-2} r\sum_{k=2}^M (-1)^{k+1} \frac{r_1}{r_k}  + \begin{cases} s^{M-2}c \qquad &M\text{ is even,}\\
s^{M-1} &M\text{ is odd}
\end{cases}\\
&= s^{M-2} \left(\frac{r+(-1)^M {r^{-1}}}{2} + \sum_{k=2}^M (-1)^{k+1} \frac{rr_1}{r_k} \right)
\end{split}
\eqne
for all $M\geq 3$, by induction.
\end{proof}
We now fix $l\in \{ 1, \ldots , M-1\}$ and we consider $\B_l$ (recall \eqref{def_Bl}).
\begin{proposition}\label{prop_Bl}
If $M\ge 2$ and $1\le l\le M-1$, then 
\begin{equation}
\begin{aligned}
\det\mathcal{B}_{l}(a,\Theta) & = \sinh^{M-2}(\pi A)\left(\frac{ \ee^{\pi A} - (-1)^M\ee^{-\pi A} }{2} \right)\\ 
&\quad \ +\sinh^{M-2}(\pi A)\left(\sum_{k=0}^{l-1}(-1)^{l+k}\ee^{A(\theta_{k}-\theta_{l}+\pi)} +(-1)^M \sum_{k=l}^{M-1}(-1)^{l+k}\ee^{A(\theta_{k}-\theta_{l}-\pi)}\right)
\end{aligned}
\end{equation}

for $a>0$ and $\Theta = (\theta_{1}, \ldots, \theta_{M-1})\in\R^{M-1}$. 
\end{proposition}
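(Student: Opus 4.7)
The strategy is to follow the same pattern as the proof of Proposition~\ref{prop_B}: apply a sequence of determinant-preserving column operations to $\B_l$, then Laplace-expand. Because $\B_l$ agrees with $\B$ in every column except the $l$-th (which equals $-b$ in place of the canonical column), the first $l-1$ operations $T_1,\ldots,T_{l-1}$ from the proof of Proposition~\ref{prop_B}---which use columns $1,\ldots,l-1$ as sources---act on $\B_l$ exactly as they act on $\B$ outside the $l$-th column, putting columns $1,\ldots,l-1$ into the same clean Hessenberg form they take in $\B'''$ (two nonzero entries per column). They also modify column $l$ from $-b$ into some vector $v$, while leaving columns $l+1,\ldots,M-1$ identical to the corresponding intermediate columns of $\B$.

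Continuing with the remaining operations $T_l,\ldots,T_{M-2}$ propagates $v$ into the later columns in a way that can be tracked explicitly, since the deviation from Proposition~\ref{prop_B}'s computation is driven entirely by the difference between $v$ and the canonical $l$-th column. A perhaps cleaner alternative is to use multilinearity of the determinant in the $l$-th column: writing $-b = -\tfrac{r^{-1}}{r_1}(r_2,\ldots,r_M)^T + c\,(1,\ldots,1)^T$ splits $\det\B_l$ into two determinants with simpler $l$-th columns, each admitting reduction by column operations tailored to those simpler pivots. Either route produces a matrix whose last row (or last column, when $l=M-1$) is nearly zero, so Laplace-expanding yields a recursion relating $\det\B_l$ for parameters $r_1,\ldots,r_M$ to the analogous determinant for $r_1,\ldots,r_{M-1}$.

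Unwinding the recursion by induction on $M$, with the base case $M=2$ given by the direct $1\times 1$ computation $\B_1=-(r^{-1}r_2-cr_1)/r_1=c-r^{-1}r_2$ (which matches $\H_1$ since $r_1=1$), produces the claimed closed form.

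The main obstacle is the bookkeeping. The $-b$ column breaks the clean structure of $\B$ exactly at position $l$, and this break is reflected in the statement itself: the sum over $k=0,\ldots,l-1$ (with exponent $\theta_k-\theta_l+\pi$) collects contributions from the columns before $l$, while the sum over $k=l,\ldots,M-1$ (with exponent $\theta_k-\theta_l-\pi$ and an additional factor $(-1)^M$) collects those after. The alternating signs $(-1)^{l+k}$ arise from cofactor signs in the iterated Laplace expansion, and confirming all of these---together with the overall factor $\sinh^{M-2}(\pi A)=(-1)^M s^{M-2}$ from the parity of the expansion---is the most delicate part of the calculation.
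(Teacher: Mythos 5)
Your plan correctly identifies the paper's strategy (column elimination as in Proposition~\ref{prop_B}, Laplace expansion, recursion in $M$, induction), and your base case $M=2$ checks out. But what you have written is a plan, not a proof: the entire content of this proposition is the explicit computation that you defer as ``bookkeeping'' and ``the most delicate part of the calculation.'' In particular, your claim that Laplace expansion ``yields a recursion relating $\det\B_l$ for parameters $r_1,\ldots,r_M$ to the analogous determinant for $r_1,\ldots,r_{M-1}$'' understates what actually happens: expanding along the last row of the reduced matrix produces, besides the smaller copy of the same determinant, two inhomogeneous terms coming from minors that straddle the modified $l$-th column (the matrices $C_{M-2,l-1}$ and $D_{M-2,l}$ in the paper's notation). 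Each of these requires its own recursive evaluation --- in the paper this occupies \eqref{det_D}, \eqref{det_C_temp}, \eqref{det_C11} and \eqref{det_C} --- and only after substituting those closed forms back does the recursion for $\det E''_{M-1} - s\det E''_{M-2}$ close up and telescope to the claimed formula. Without identifying and evaluating these extra terms, the induction cannot be run, so the gap is not mere bookkeeping but a missing piece of the argument.

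A second omission: the elimination scheme you describe uses columns $1,\ldots,l-1$ as sources before reaching column $l$, so it degenerates when $l=1$ (no columns precede the modified one) and the terminal step differs when $l=M-1$ (the anomalous column is the last one, so the last-row expansion has a different shape). The paper treats $l=M-1$ by a separate direct expansion and handles $l=1$ via a symmetry of the matrix under reversing the order of rows and columns, which maps it to the already-settled case $l=M-1$ with transformed parameters $(r^{-1}, r^2 r_1, r_M,\ldots,r_2)$. Your induction on $M$ with base case $M=2$ does not by itself cover these boundary values of $l$ for larger $M$. Your suggested alternative --- splitting $-b$ by multilinearity in the $l$-th column --- is a reasonable idea and might streamline the computation, but as stated it is only a suggestion; you would still need to carry out both resulting determinant evaluations and verify that their sum reproduces the two sums (over $k<l$ and $k\ge l$) with the signs $(-1)^{l+k}$ and the $(-1)^M$ factor in the statement.
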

\begin{proof} We will consider the matrix $E_{M-1,l}=E_{M-1,l} (r, r_{1},r_{2},\ldots,r_{M})$ with the opposite sign of the $l$-th column, namely we set 
\[
E_{M-1,l} \coloneqq \left(  \begin{array}{ccccc|c|ccc}
 \frac{cr_2-rr_1}{r_2} & \frac{rr_2-rr_1}{r_3}  & \frac{rr_2-rr_1}{r_4}  &\ldots &\frac{rr_2-rr_1}{r_l}& \frac{r^{-1}r_2-cr_1}{r_1}&\frac{rr_2-rr_1}{r_{l+2}} & \ldots & \frac{rr_2-rr_1}{r_M}  \\
 \frac{r^{-1}r_3-rr_1}{r_2} & \frac{cr_3 - rr_1 }{r_3} & \frac{rr_3-rr_1}{r_4} & \ldots &\frac{rr_3-rr_1}{r_l}& \frac{r^{-1}r_3-cr_1}{r_1}&\frac{rr_3-rr_1}{r_{l+2}} & \ldots &  \frac{rr_3-rr_1}{r_M} \\
 \frac{r^{-1}r_4-rr_1}{r_2} & \frac{r^{-1} r_4 - rr_1 }{r_3} & \frac{cr_4-rr_1}{r_4} & \ldots &\frac{rr_4-rr_1}{r_l}& \frac{r^{-1}r_4-cr_1}{r_1}&\frac{rr_4-rr_1}{r_{l+2}}& \ldots &  \frac{rr_4-rr_1}{r_M} \\
 \vdots & \vdots & \vdots &\ddots & \vdots &\vdots &\vdots & \ddots & \vdots \\
 \frac{r^{-1}r_M-rr_1}{r_2} & \frac{r^{-1}r_M-rr_1}{r_3}& \frac{r^{-1}r_M-rr_1}{r_4} &\ldots &\frac{rr_M-rr_1}{r_l}& \frac{r^{-1}r_M-cr_1}{r_1}&\frac{r^{-1}r_M-rr_1}{r_{l+2}}& \ldots & \frac{cr_M-rr_1}{r_M}
 \end{array} \right),
\]
 where the vertical lines indicate the $l$-th column, and we will write 
\[
c\coloneqq \frac{r+r^{-1} }2 , \qquad s \coloneqq \frac{r-r^{-1}}2.
\]
Here the first index, ``$M-1$'', indicates the size of the matrix. We shall  write $E_{M-1} \coloneqq E_{M-1,l}$ for brevity if $l\in \{ 1 , \ldots , M-1 \} $ is fixed. Since $\det \B_l = -\det E_{M-1}$ in the particular choice 
\[r\coloneqq \ee^{-A\pi }, \quad r_k \coloneqq \ee^{-A\theta_{k-1}}, \quad \text{ in which case } s=  -\sinh (\pi A),\quad c= \cosh (\pi A),\]
we need to show that
\begin{equation}
\begin{aligned}\label{det_e_nts}
-\det E_{M-1} & = (-s)^{M-2}\left(\frac{r^{-1}-(-1)^M r}{2} \right)\\
&\quad +(-s)^{M-2}\left(\sum_{k=1}^{l} (-1)^{l+k+1}r^{-1} \frac{r_{l+1}}{r_{k}} + (-1)^M \sum_{k=l+1}^{M} (-1)^{l+k+1} r\frac{r_{l+1}}{r_k}\right)
\end{aligned}
\end{equation}
for $M\geq 3$ and $l\in \{ 1, \ldots , M-1 \}$. We begin with the case $2\le l \le M-2$. We define 
\begin{align*}
R_{1}:=\begin{pmatrix}  
1 & -\frac{r_2}{r_3} & -\frac{r_2}{r_4}   & \ldots & -\frac{r_2}{r_M} \\ 0 & 1 & 0 &\ldots & 0   \\
0   &0  &1  & \ldots & 0  \\
\vdots & \vdots & \vdots & \ddots & \vdots  \\
0  & 0 & 0  &\ldots & 1 \end{pmatrix},\qquad 
R_{l-1}:=\begin{pmatrix}  
1 & 0 &   \ldots & 0&0&0&\ldots & 0 \\ 0 & 1 & \ldots & 0&0&0&\ldots &0   \\
\vdots &  \vdots & \ddots & \vdots & \vdots &\vdots &\ddots & \vdots  \\
0   & 0  &\ldots & 1&0&-\frac{r_l}{r_{l+2}}&\ldots & -\frac{r_l}{r_M} \\
0   & 0  &\ldots & 0&1&0&\ldots & 0 \\
0   & 0  &\ldots & 0&0&1&\ldots & 0 \\
\vdots &  \vdots & \ddots & \vdots & \vdots &\vdots &\ddots & \vdots  \\
0   & 0  &\ldots & 0&0&0&\ldots & 1 \end{pmatrix}
\end{align*}
and, for any $2\le k \le l-2$, write
\[
R_{k}:=\begin{pmatrix}
1 & 0 & \ldots & 0&0&0&\ldots&0&0&0&\ldots & 0\\ 
0 & 1 & \ldots & 0& 0&0&\ldots&0&0&0&\ldots &0 \\
\vdots &  \vdots & \ddots & \vdots&\vdots &\vdots&\ldots&\vdots&\vdots & \vdots &\ldots & 0 \\
0   & 0  &\ldots & 1& -\frac{r_{k}}{r_{k+1}} & -\frac{r_{k}}{r_{k+2}} & \ldots&-\frac{r_{k}}{r_{l}} & 0 &-\frac{r_k}{r_{l+2}}&\ldots & -\frac{r_k}{r_M}\\
0   & 0  &\ldots & 0&1&0&\ldots&0&0&0& \ldots & 0\\
0   & 0  &\ldots & 0&0&1&\ldots&0&0&0& \ldots & 0\\
0   & 0  &\ldots & 0&0&0&\ldots&1&0&0&\ldots & 0\\
0   & 0  &\ldots & 0&0&0&\ldots&0&1&0&\ldots & 0\\
0   & 0  &\ldots & 0&0&0&\ldots&0&0&1&\ldots & 0\\
\vdots &  \vdots & \ddots & \vdots&\vdots & \vdots&\ldots &\vdots &\vdots&\vdots&\ddots & \vdots \\
0   & 0  &\ldots & 0&0&0&\ldots&0&0&0&\ldots & 1
\end{pmatrix}.\]

We perform the elimination procedure up to the $l$-th column to obtain
\[\begin{split}
E':=ER_{1}\ldots R_{l-1}&= \left(  \begin{array}{ccccc|c|ccc}
 \frac{cr_2-rr_1}{r_2} & s \frac{r_2}{r_3}  & 0   &\ldots &0 & -s\frac{r_2-r_1}{r_1} &0  & \ldots & 0  \\
 \frac{r^{-1}r_3-rr_1}{r_2} & s & s\frac{r_3}{r_4}  & \ldots &0 & s &0  & \ldots & 0  \\
 \frac{r^{-1}r_4-rr_1}{r_2} & 0 & s & \ldots & 0 & s & 0 & \ldots &  0  \\
 \vdots & \vdots & \vdots &\ddots & \vdots &\vdots &\vdots & \ddots & \vdots \\
 \frac{r^{-1}r_l-rr_1}{r_2} & 0 & 0 & \ldots & s & s & s\frac{r_l}{r_{l+2}} & \ldots &  s\frac{r_l}{r_{M}} \\
 \frac{r^{-1}r_{l+1}-rr_1}{r_2} & 0 & 0 & \ldots & 0 & s & 2s\frac{r_{l+1}}{r_{l+2}} & \ldots &  2s\frac{r_{l+1}}{r_{M}}  \\
 \frac{r^{-1}r_{l+2}-rr_1}{r_2} & 0 & 0 & \ldots & 0 & s & s & \ldots &  2s\frac{r_{l+2}}{r_{M}}  \\
 \frac{r^{-1}r_{l+3}-rr_1}{r_2} & 0 & 0 & \ldots & 0 & s & 0 & \ldots &  2s\frac{r_{l+3}}{r_{M}}  \\
 \vdots & \vdots & \vdots &\ddots & \vdots &\vdots &\vdots & \ddots & \vdots \\
 \frac{r^{-1}r_{M-1}-rr_1}{r_2} & 0& 0 &\ldots &0& s&0& \ldots & 2s \frac{r_{M-1}}{r_M}\\
 \frac{r^{-1}r_M-rr_1}{r_2} & 0& 0 &\ldots &0& s&0& \ldots & s
 \end{array} \right).
\end{split}
\]
We continue the elimination procedure past the $l$-th column, that is, we let
\[\begin{split}
E_{M-1}''&\coloneqq E' \left( \begin{array}{cccc|c|cccc}
1 & 0 &   \ldots & 0&0&0&0&\ldots & 0 \\ 0 & 1 & \ldots & 0&0&0&0&\ldots &0   \\
\vdots &  \vdots & \ddots & \vdots & \vdots &\vdots &0&\ldots & 0  \\
0   & 0  &\ldots & 1&0&0&0&\ldots & 0 \\
0   & 0  &\ldots & 0&1&0&0&\ldots & 0 \\
0   & 0  &\ldots & 0&0&1&-\frac{r_{l+2}}{r_{l+3}}&\ldots & -\frac{r_{l+2}}{r_{M}} \\
0   & 0  &\ldots & 0&0&0&1&\ldots & 0 \\
\vdots &  \vdots & \ddots & \vdots & \vdots &\vdots &0&\ldots & 0  \\
0   & 0  &\ldots & 0&0&0&0&\ldots & 1 \end{array}\right)\ldots  \begin{pmatrix}  
1 & 0 &  \ldots & 0&0 \\
0 & 1 & \ldots & 0&0   \\
\vdots & \vdots  & \ddots & \vdots &\vdots  \\
0 &0  & \ldots & 1&-\frac{r_{M-1}}{r_M}  \\
0  & 0   &\ldots & 0&1 \end{pmatrix} ,
\end{split}
\]
where the subindex ``$M-1$'' emphasizes the size of the matrix, to obtain that 
\begin{figure}[h]
\centering
\includegraphics[width=\textwidth]{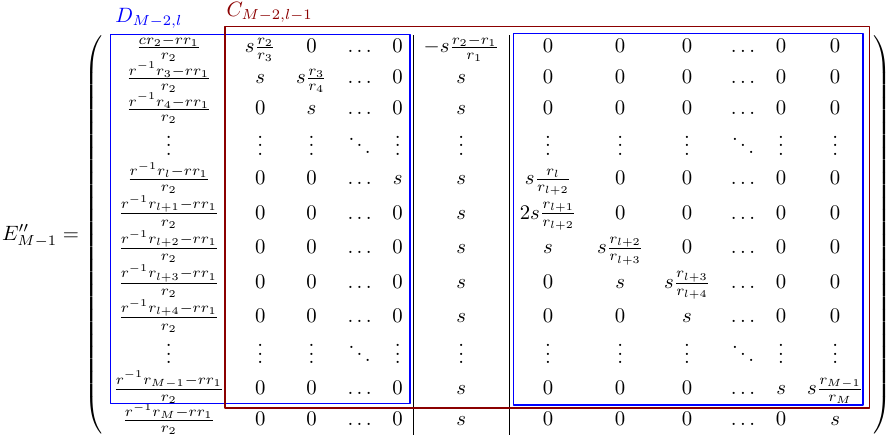}
\end{figure} 
 
Applying the Laplace expansion with respect to the last row, we infer that
\eqnb\label{det_of_e}
\det E_{M-1}'' = (-1)^M \frac{r^{-1}r_M -rr_1}{r_2}\det C_{M-2,l-1} +(-1)^{M+l+1} s \det D_{M-2,l}+ s \det E_{M-2}''
\eqne
for each $l\in \{ 2 , \ldots , M-2 \}$, where $C_{M-2,l-1}$ and $D_{M-2,l}$ are submatrices of $E_{M-1}''$ pointed out above. 

Note that Laplace's expansion with respect to the last column of $D_{M-2,l}$ gives
\eqnb\label{det_D}
\begin{split}
\det D_{M-2,l} &= s \frac{r_{M-1}}{r_M } \det D_{M-3,l} =\ldots = s^{M-2-l} \frac{r_{l+2}}{r_M } \det D_{l,l}\\
&= s^{M-2-l} \frac{r_{l+2}}{r_M }\left(  2s\frac{r_{l+1} }{r_{l+2}} \det \B_{l-1}''' - s\frac{r_l}{r_{l+2}} \cdot (-s)^{l-2} \frac{r^{-1}r_{l+1}-rr_1}{r_l}  \right)\\
& = s^{M-3} \frac{1}{r_M }\left(  {r_{l+1} } \left( {r}+(-1)^l {r^{-1}}+2 \sum_{k=2}^l (-1)^{k+1} \frac{rr_1}{r_k}   \right) +(-1)^{l+1} ({r^{-1}r_{l+1}-rr_1})  \right)\\
& = s^{M-3} \frac{1}{r_M }\left(  {r_{l+1} } \left( {r}+2 \sum_{k=2}^l (-1)^{k+1} \frac{rr_1}{r_k}   \right) +(-1)^{l}rr_1  \right),
\end{split}
\eqne
where, in the second line, we used the fact that
\[
\det \begin{pmatrix} 
\frac{cr_2-rr_1}{r_2} & s\frac{r_2}{r_3} & 0 &\ldots & 0&0 \\
\frac{r^{-1}r_3-rr_1}{r_2} & s  & s\frac{r_3}{r_4} &\ldots &0& 0 \\
\frac{r^{-1}r_4-rr_1}{r_2} & 0 & s &\ldots & 0&0 \\
\vdots & \vdots & \vdots & \ddots & \vdots & \vdots \\
\frac{r^{-1}r_{l-1}-rr_1}{r_2} & 0 & 0 &\ldots & s &s\frac{r_{l-1}}{r_l}  \\
\frac{r^{-1}r_{l+1}-rr_1}{r_2} & 0 & 0 &\ldots & 0 &0 
 \end{pmatrix} = (-s)^{l-2} \frac{r^{-1}r_{l+1}-rr_1}{r_l},
\]
and, in the third line, where we have recalled \eqref{det_B_M-1},
\eqnb\label{det_B_M-1_repeated}
\det \B_{l-1}''' = \frac{s^{l-2}}2 \left( {r}+(-1)^l {r^{-1}}+2 \sum_{k=2}^l (-1)^{k+1} \frac{rr_1}{r_k}   \right).
\eqne

Similarly, Laplace's expansion with respect to the last column of $C_{M-2,l-1}$ gives
\eqnb\label{det_C_temp}\begin{split}
\det C_{M-2,l-1} &= s\frac{r_{M-1}}{r_M } \det C_{M-3,l-1} = \ldots = s^{M-2-l} \frac{r_{l+2}}{r_M } \det C_{l,l-1}\\
&= s^{M-2-l} \frac{r_{l+2}}{r_M } \left( 2s\frac{r_{l+1}}{r_{l+2}} \det C_{l-1,l-1} -s\frac{r_l}{r_{l+2}} \cdot s^{l-1} \frac{r_2}{r_l} \right) ,
\end{split}
\eqne
where, in the second line, we used the fact that
\[
\det \begin{pmatrix} 
 s\frac{r_2}{r_3} & 0 &\ldots & 0&0 &-s\frac{r_2-r_1}{r_1} \\
 s  & s\frac{r_3}{r_4} &\ldots &0& 0 &s\\
  \vdots & \vdots & \ddots & \vdots & \vdots &\vdots  \\
 0 & 0 &\ldots & s &s\frac{r_{l-1}}{r_l}  &s\\
0 & 0 &\ldots & 0 &0 &s
 \end{pmatrix} = s^{l-1} \frac{r_2}{r_l}.
\]
As for 
\[
C_{l-1,l-1} = \begin{pmatrix} 
 s\frac{r_2}{r_3} & 0 &\ldots & 0&0 &-s\frac{r_2-r_1}{r_1} \\
 s  & s\frac{r_3}{r_4} &\ldots &0& 0 &s\\
  \vdots & \vdots & \ddots & \vdots & \vdots &\vdots  \\
 0 & 0 &\ldots & s &s\frac{r_{l-1}}{r_l}  &s\\
0 & 0 &\ldots & 0 &s &s
 \end{pmatrix},
\]
we note that $\det C_{2,2} = s^2 \left( \frac{r_2 }{r_3} + \frac{r_2-r_1}{r_1} \right)= s^2 r_2 \left( \frac{1}{r_3}-\frac{1}{r_2}+\frac{1}{r_1} \right)$, and since 
\[
\det C_{l-1,l-1} = s^{l-1} \frac{r_2}{r_{l}} - s \det C_{l-2,l-2}
\]
for each $l\geq 4$, we obtain
\eqnb\label{det_C11}
\det C_{l-1,l-1} = s^{l-1} r_2 \sum_{k=1}^l \frac{(-1)^{k+l}}{r_k},
\eqne 
by induction. Applying \eqref{det_C11} in \eqref{det_C_temp}, we infer that
\begin{equation}\label{det_C}
\begin{split}
\det  C_{M-2,l-1}&= s^{M-2-l} \frac{r_{l+2}}{r_M } \left( 2s\frac{r_{l+1}}{r_{l+2}} s^{l-1} r_2 \sum_{k=1}^l \frac{(-1)^{k+l}}{r_k} -s\frac{r_l}{r_{l+2}} \cdot s^{l-1} \frac{r_2}{r_l} \right) \\
&= s^{M-2} \frac{r_{2}}{r_M } \left( 2{r_{l+1}} \sum_{k=1}^l \frac{(-1)^{k+l}}{r_k} - 1 \right).
\end{split}
\end{equation}
Therefore, substituting \eqref{det_D} and \eqref{det_C} in \eqref{det_of_e} gives
\[\begin{split}
\det E_{M-1}'' -s \det E_{M-2}'' &= (-1)^M \frac{r^{-1}r_M -rr_1}{r_2}s^{M-2} \frac{r_{2}}{r_M } \left( 2{r_{l+1}} \sum_{k=1}^l \frac{(-1)^{k+l}}{r_k} - 1 \right) \\
& \qquad +(-1)^{M+l+1}  s^{M-2} \frac{1}{r_M }\left(  {r_{l+1} } \left( {r}+2 \sum_{k=2}^l (-1)^{k+1} \frac{rr_1}{r_k}   \right) +(-1)^{l}rr_1  \right)\\
&= (-1)^M s^{M-2} \left[ 2\left(r^{-1} -r\frac{r_1}{r_M} +r\frac{r_1}{r_M} \right) r_{l+1}\sum_{k=1}^l \frac{(-1)^{k+l}}{r_k} \right. \\
&\qquad \underbrace{+2 (-1)^l \frac{r_{l+1} r}{r_M}}_{\text{ from case }k=1} \underbrace{- r^{-1} + r \frac{r_1}{r_M}}_{\text{ remaining from }1\text{st line}}
\left. + (-1)^{l+1} \frac{rr_{l+1}}{r_M}-\frac{rr_{1}}{r_M} \right] \\
&=(-1)^M s^{M-2} \left[ 2r^{-1} r_{l+1}\sum_{k=1}^l \frac{(-1)^{k+l}}{r_k}+ (-1)^l \frac{r_{l+1} r}{r_M}- r^{-1}  \right].
\end{split}
\]
Furthermore, if $l=M-1$ we apply the Laplace expansion with respect to the last row of $E''_{M-1}$ to obtain
\[\begin{split}
\det E_{M-1}'' &= (-1)^M \frac{r^{-1}r_M -rr_1}{r_2}\det C_{M-2,M-2} + s \det \mathcal{B}_{M-2}'''\\
&= s^{M-2} \left( (-1)^M (r^{-1} r_M -rr_1 ) \sum_{k=1}^{M-1} \frac{(-1)^{k+M+1}}{r_k} + \frac{r}{2} + (-1)^{M-1}\frac{r^{-1}}{2} + \sum_{k=2}^{M-1} (-1)^{k+1} \frac{r r_1}{r_k} \right)\\
&= s^{M-2} \left( r^{-1} r_M \sum_{k=1}^{M-1} \frac{(-1)^{k+1}}{r_k} - \frac{r}{2} + (-1)^{M-1} \frac{r^{-1}}{2} \right)
\end{split}
\] 
for each $l\in \{ 2 , \ldots , M-2 \}$, where we have used \eqref{det_C11} and \eqref{det_B_M-1_repeated} in the second line.
The above two equalities together with the induction argument, prove \eqref{det_e_nts}, as required. 

In the case $l=M-1$, we proceed with the following elimination procedure to obtain
\[
E' := ER_{1}\ldots R_{l-2} = \begin{pmatrix} 
\frac{cr_2-rr_1}{r_2} & s\frac{r_2}{r_3} & 0 &\ldots & 0&0 & -s\frac{r_2-r_1}{r_1} \\
\frac{r^{-1}r_3-rr_1}{r_2} & s  & s\frac{r_3}{r_4} &\ldots &0& 0 & s \\
\frac{r^{-1}r_4-rr_1}{r_2} & 0 & s &\ldots & 0&0 & s\\
\vdots & \vdots & \vdots & \ddots & \vdots & \vdots & \vdots\\
\frac{r^{-1}r_{M-2}-rr_1}{r_2} & 0 & 0 &\ldots & s &s\frac{r_{M-2}}{r_{M-1}} & s \\
\frac{r^{-1}r_{M-1}-rr_1}{r_2} & 0 & 0 &\ldots & 0 &s & s \\
\frac{r^{-1}r_{M}-rr_1}{r_2} & 0 & 0 &\ldots & 0 &0 & s
\end{pmatrix}.
\]
Expanding the determinant with respect to the last row, and using \eqref{det_B_M-1} and \eqref{det_C11} we infer that
\begin{align*}
\det E' & = (-1)^{M}\frac{r^{-1}r_{M}-rr_1}{r_2} \det C_{M-2,M-2}+s\det\mathcal{B}'''_{M-2} \\
& = (-1)^{M}\frac{r^{-1}r_{M}-rr_1}{r_2}s^{M-2} r_2 \sum_{k=1}^{M-1} \frac{(-1)^{k+M-1}}{r_k} + s^{M-2} \left( \frac{r+(-1)^{M-1}r^{-1}}{2}+ \sum_{k=2}^{M-1} (-1)^{k+1} \frac{rr_1}{r_k} \right) \\
& = r^{-1}r_{M}s^{M-2}\sum_{k=1}^{M-1} \frac{(-1)^{k-1}}{r_k} - s^{M-2}r + s^{M-2} \left( \frac{r+(-1)^{M-1}r^{-1}}{2}\right) \\
& = (-s)^{M-2} \left( \frac{(-1)^{M}r - r^{-1}}{2} +r^{-1}r_{M}(-s)^{M-2}\sum_{k=1}^{M-1} \frac{(-1)^{n+k-1}}{r_k} - (-s)^{M-2}(-1)^{n}r\right),
\end{align*}
which gives \eqref{det_e_nts}, as desired. To prove the formula \eqref{det_e_nts} in the remaining case $l=1$, we define the matrix $\tilde E$ by $\tilde E[i,j] := E[M-i+1, M-j+1]$ for $1\le i,j\le M$, where $E[i,j]$ denotes the $(i,j)$-th entry of $E$. Clearly $\det \tilde E = \det E$ and it is not difficult to check that the following symmetry relation holds
\begin{align*}
\tilde E_{M-1,1}(r,r_{1},r_{2},\ldots,r_{M-1}) = E_{M-1,M-1}(r^{-1},r^{2}r_{1},r_{M},r_{M-1},\ldots,r_{M-k+2},\ldots r_{3},r_{2})
\end{align*}
for any $r,r_{1},r_{2},\ldots,r_{M}\in\C\setminus\{0\}$. Therefore applying the formula \eqref{det_e_nts} in the already proved case $l=M-1$, we obtain
\begin{align*}
\det E_{M-1,1} & = \det \tilde E_{M-1,M-1} = \det E_{M,M-1}(r^{-1},r^{2}r_{1},r_{M},r_{M-1},\ldots,r_{M-k+2},\ldots r_{3},r_{2}) \\
& = -s^{n-2}\left(\frac{r-(-1)^n r^{-1}}{2} 
+ \sum_{k=2}^{n-1} (-1)^{n+k}r \frac{r_{2}}{r_{n-k+2} } + (-1)^{n+1}r \frac{r_{2}}{r^{2}r_1 } + (-1)^n r^{-1}\right) \\
& = -s^{n-2}\left(\frac{r+(-1)^n r^{-1}}{2} 
+ \sum_{k=3}^{n} (-1)^{k}r \frac{r_{2}}{r_{k} } + (-1)^{n+1} \frac{r_{2}}{rr_1 }\right),
\end{align*}
which provides the formula \eqref{det_e_nts} in the case $l=1$ and completes the proof of the proposition. 
\end{proof}

\section{Limiting behaviour as $a\to \infty $}\label{sec_conv_claims}

In this section we prove the claims from the Step~2 regarding the limiting behaviour of $-a^{2}\partial_{a}H(a,\Theta)$ as $a\to\infty$, where $\Theta\in\mathcal{D}$. 

We first show that the limit \eqref{R+iI} is uniform on compact subsets of the set $\mathcal{D}$. By definition \eqref{def_of_H} of $H(a,\Theta )$, we have
\begin{align}\label{eq-h}
\partial_{a}H_{l}(a,\Theta) = \frac{\partial_{a}\H_{l} (a,\Theta) \K (a,\Theta) - \partial_{a}\K (a,\Theta)\H_{l}(a,\Theta)}{\K (a,\Theta)^{2}}
\end{align}
for each $1\le l\le M-1$. Thus, since
\begin{align*}
\partial_{a} \K (a,\Theta) & = \pi\partial_{a} A(a)\frac{\ee^{\pi A} - (-1)^{M}\ee^{-\pi A }}{2} + (-1)^{M}\partial_{a} A(a)\sum_{k=1}^{M-1}(-1)^{k}(\theta_{k}-\pi)\ee^{A(\theta_{k}-\pi)},\\
\partial_{a}\H_l (a,\Theta) & = \partial_{a} A(a) \pi\frac{ \ee^{\pi A} + (-1)^{M}\ee^{-\pi A} }{2} 
 +\partial_{a} A(a) \sum_{k=0}^{l-1}(-1)^{l+k}(\theta_{k}-\theta_{l}+\pi) \ee^{A(\theta_{k}-\theta_{l}+\pi)} \\
 &\hspace{2cm} + (-1)^{M}\partial_{a} A(a)\sum_{k=l}^{M-1}(-1)^{l+k}(\theta_{k}-\theta_{l}-\pi)\ee^{A(\theta_{k}-\theta_{l}-\pi)},
\end{align*}
and since $-a^{2} \frac{\d A}{\d a } (a)\to -2$ as $a\to\infty$ (recall \eqref{def_of_A} that $A=-2ai/(a+i)$), we obtain  
\begin{align}
&\begin{aligned}\label{eq-h-1}
\lim_{a\to\infty}\left[-a^{2}\partial_{a}\K (a,\Theta)\right] &= ((-1)^{M}-1)\pi -2(-1)^{M}\sum_{k=1}^{M-1}(-1)^{k}(\theta_{k}-\pi)\ee^{-2i\theta_{k}}, 
\end{aligned} \\
&\begin{aligned}\label{eq-h-2}
\lim_{a\to\infty}\left[-a^{2}\partial_{a}\H_{l}(a,\Theta)\right] & = -(1+(-1)^{M})\pi -2 \sum_{k=0}^{l-1}(-1)^{l+k}(\theta_{k}-\theta_{l}+\pi) \ee^{-2i(\theta_{k}-\theta_{l})} \\
&\qquad -2 (-1)^{M}\sum_{k=l}^{M-1}(-1)^{l+k}(\theta_{k}-\theta_{l}-\pi)\ee^{-2i(\theta_{k}-\theta_{l})},
\end{aligned}
\end{align}
and the limits are uniform for $\Theta$ from compact subsets of $\mathcal{D}$. In view of \eqref{eq-h} we see that 
$-a^{2}\partial_{a}H(a,\Theta)$ converges as $a\to \infty$, uniformly on compact subsets of $\D$, which proves \eqref{R+iI}. \\

We can now prove that $\widetilde{F}$, given by the formula \eqref{def_of_tildeF}, is a $C^1$ map on the set 
$[0,1]\times B(\overline\Theta,\varepsilon)$. 
Indeed, noting that $\p_t \widetilde{F} (t,\Theta )= -a^2 \p_a F(a,\Theta )$ for $t=1/a >0$, the uniform convergence \eqref{R+iI} proved above shows that $\p_t \widetilde{F}$ is continuous on $[0,1]\times B(\overline\Theta,\varepsilon)$. A similar computation as above shows that $\partial_{\theta_l}\widetilde F(t,\Theta)$ is continuous on the same set.

We proceed to verify \eqref{limit_at_angles}. For brevity, in the rest of this section, we will apply the convention 
\eqnb\label{conv}
 \overline{\Theta } \equiv \Theta = (\theta_1,\ldots , \theta_{M-1} )  ,
\eqne
where $\overline\Theta$ is given by \eqref{angles_possible}. Observe that, if $M\ge 3$ is an odd number, then by \eqref{eq-l-1b}, 
\begin{equation}
\begin{aligned}\label{rr3}
\overline{\K } ({\Theta })&=\sum_{k=1}^{M-1}(-1)^{k+1}\ee^{-2i\theta_{k}}
= -\frac{2i\sin(2\theta_{1})}{(1+\ee^{-2i\theta_{1}})(1+\ee^{2i\theta_{1}})},
\end{aligned}
\end{equation}
which shows that $\overline\Theta={\Theta } \in \D$. Moreover,
\begin{align}\label{r1}
\overline{\H}_{l}(\Theta)  = -\frac{2i\sin(2\theta_{1})}{(1+\ee^{-2i\theta_{1}})(1+\ee^{2i\theta_{1}})},\quad 1\le l \le M-1,
\end{align}
which can be verified by first calculating the finite geometric series,
\eqnb\label{rr2}
\sum_{k=0}^{l-1}(-1)^{l+k}\ee^{2i(\theta_{l}-\theta_{k})} = (-1)^{l}\ee^{2i\theta_{l}} \sum_{k=0}^{l-1}(-1)^{k}\ee^{-2i\theta_{k}}
= -\frac{1-(-1)^{l}\ee^{2i\theta_{l}} + \ee^{2i\theta_{1}} - (-1)^{l}\ee^{2i\theta_{l+1}}}{(1+\ee^{-2i\theta_{1}})(1+\ee^{2i\theta_{1}})} ,
\eqne
\eqnb\label{rr1}
\sum_{k=l}^{M-1}(-1)^{l+k}\ee^{2i(\theta_{l}-\theta_{k})}=\sum_{k=0}^{M-l-1}(-1)^{k}\ee^{-2i\theta_{k}}
=\frac{(-1)^{l}\ee^{2i\theta_{l+1}}+\ee^{2i\theta_{1}}+(-1)^{l}\ee^{2i\theta_{l}} + 1}{(1+\ee^{-2i\theta_{1}})(1+\ee^{2i\theta_{1}})}.
\eqne 
Thus, recalling the definition \eqref{eq-l-2b} of $\overline{\H }_l$, we obtain  
\begin{equation}
\begin{aligned}\label{eq33}
&(1+\ee^{-2i\theta_{1}})(1+\ee^{2i\theta_{1}})\overline{\H}_l (\Theta ) \\
&\qquad=(1+\ee^{-2i\theta_{1}})(1+\ee^{2i\theta_{1}})\left(1+\sum_{k=0}^{l-1}(-1)^{l+k}\ee^{2i(\theta_{l}-\theta_{k})} - \sum_{k=l}^{M-1}(-1)^{l+k}\ee^{2i(\theta_{l}-\theta_{k})}\right) \\
&\qquad= 2+\ee^{2i\theta_{1}}+\ee^{-2i\theta_{1}} -(1-(-1)^{l}\ee^{2i\theta_{l}} + \ee^{2i\theta_{1}} - (-1)^{l}\ee^{2i\theta_{l+1}}) \\
&\quad\qquad  -((-1)^{l}\ee^{2i\theta_{l+1}}+\ee^{2i\theta_{1}}+(-1)^{l}\ee^{2i\theta_{l}} + 1)\\
&\qquad =-\ee^{2i\theta_{1}} + \ee^{-2i\theta_{1}} = -2i\sin(2\theta_{1}),
\end{aligned}
\end{equation}
which shows \eqref{r1}, and also \eqref{h_is_1}, since the right-hand sides of \eqref{rr3} and \eqref{r1} are the same. \\

To verify \eqref{limit_at_angles} we apply \eqref{rr3}--\eqref{r1} in the quotient rule \eqref{eq-h} to get 
\[
\lim_{a\to\infty}\left[-a^{2}\partial_{a}H_{l}(a,\Theta)\right] = -\frac{(1+\ee^{-2i\theta_1 })(1+\ee^{2i\theta_1})}{2i \sin(2\theta_1 ) } \underbrace{\left(\lim_{a\to\infty}\left[-a^{2}\partial_{a} \H_l (a,\Theta)\right] -\lim_{a\to\infty}\left[-a^{2}\partial_{a} \K (a,\Theta)\right] \right)}_{=:J}
\]
The ingredients of $J$ can be computed explicitly using \eqref{eq-h-1}--\eqref{eq-h-2}. Namely,  \eqref{eq-h-1} gives that
\begin{equation}
\begin{aligned}\label{asy-k}
&\lim_{a\to\infty}\left[-a^{2}\partial_{a} \K (a,\Theta )\right] 
 = 2\sum_{k=1}^{M-1}(-1)^{k}\theta_{k}\ee^{-2i\theta_{k}} - 2\pi\sum_{k=0}^{M-1}(-1)^{k}\ee^{-2i\theta_{k}} \\
& \qquad  = \frac{2\pi}{M}\frac{(M-1)\ee^{-2i\theta_{1}} + M-\ee^{-2i\theta_{1}}}{(1+\ee^{-2i\theta_{1}})^{2}} 
-4\pi \frac{1}{\ee^{-2i\theta_{1}} + 1} 
  = -\frac{2\pi}{M}\frac{(M+2)\ee^{-2i\theta_{1}} + M}{(1+\ee^{-2i\theta_{1}})^{2}}.
\end{aligned}
\end{equation}
On the other hand, by \eqref{eq-h-2}, we have
\begin{align*}
& \lim_{a\to\infty}\left[-a^{2}\partial_{a}\H_{l}(a,\Theta)\right] \\
&\quad = -2 \pi -2 \sum_{k=0}^{l-1}(-1)^{l+k}(\theta_{k}-\theta_{l}+\pi) \ee^{-2i(\theta_{k}-\theta_{l})} +2 \sum_{k=l+1}^{M-1}(-1)^{l+k}(\theta_{k}-\theta_{l}-\pi)\ee^{-2i(\theta_{k}-\theta_{l})} \\
& \quad = -2 \sum_{k=0}^{l-1}(-1)^{l+k}(\theta_{k}-\theta_{l}) \ee^{-2i(\theta_{k}-\theta_{l})} +2 \sum_{k=l+1}^{M-1}(-1)^{l+k}(\theta_{k}-\theta_{l})\ee^{-2i(\theta_{k}-\theta_{l})}-2\pi \sum_{k=0}^{M-1}(-1)^{l+k}\ee^{-2i(\theta_{k}-\theta_{l})} \\
&\quad =:I_{1}+I_{2}+I_{3}. 
\end{align*}
Simple calculations show that \!\footnote{\,We recall the formula $\sum_{k=1}^{m-1}k b^{k} = \frac{(m-1)b^{m+1}-mb^{m}-b}{(1-b)^{2}}$ for $m\ge 2$ and $b\in\C\setminus\{1\}$.}
\begin{align*}
I_{1} & = \sum_{k=1}^{l}(-1)^{k}\theta_{k} \ee^{2i\theta_{k}} = \frac{\pi}{M}\frac{l(-1)^{l}\ee^{2il\theta_{1}} + (l+1)(-1)^{l}\ee^{2i(l-1)\theta_{1}}-\ee^{-2i\theta_{1}}}{(1+\ee^{-2i\theta_{1}})^{2}}, \\
I_{2} & = \sum_{k=1}^{M-l-1}(-1)^{k}\theta_{k}\ee^{-2i\theta_{k}} = \frac{\pi}{M} \frac{(M-l-1)(-1)^{l}\ee^{2i(l-1)\theta_{1}} + (M-l)(-1)^{l}\ee^{2il\theta_{1}}-\ee^{-2i\theta_{1}}}{(1+\ee^{-2i\theta_{1}})^{2}},\\
I_{3} & = -2(-1)^{l}\frac{\ee^{2i(l-1)\theta_{1}} + \ee^{2il\theta_{1}}}{(1+\ee^{-2i\theta_{1}})^{2}}.
\end{align*}
Altogether,
\begin{align*}
I_{1}+I_{2}+I_{3}
 = -\frac{2\pi}{M}\frac{M(-1)^{l}\ee^{2i(l-1)\theta_{1}}+M(-1)^{l}\ee^{2il\theta_{1}}+2\ee^{-2i\theta_{1}}}{(1+\ee^{-2i\theta_{1}})^{2}}
\end{align*}
and so
\begin{align*}
&J = -\frac{2\pi}{M}\frac{M(-1)^{l}\ee^{2i(l-1)\theta_{1}}+M(-1)^{l}\ee^{2il\theta_{1}}+2\ee^{-2i\theta_{1}}}{(1+\ee^{-2i\theta_{1}})^{2}}
+ \frac{2\pi}{M}\frac{(M+2)\ee^{-2i\theta_{1}} + M}{(1+\ee^{-2i\theta_{1}})^{2}} \\
&\qquad = -2\pi\frac{(-1)^{l}\ee^{2il\theta_{1}}-1}{1+\ee^{-2i\theta_{1}}},
\end{align*}
that is 
\begin{align*}
\lim_{a\to\infty}\left[-a^{2}\partial_{a}H_{l}(a,\Theta)\right] = -i\pi \frac{((-1)^{l}\ee^{2il\theta_{1}}-1)(1+\ee^{2i\theta_{1}})}{\sin(2\theta_{1})},
\end{align*}
which is \eqref{limit_at_angles}, as required.

\section{Nondegeneracy of the point $\overline{\Theta}$}\label{sec_nondeg_theta}

In this section we verify some properties of the choice of angles $\overline{\Theta } = \frac{n\pi}{M}(1,2,\ldots ,M-1 ) $ (recall \eqref{angles_possible}) which were mentioned in Step 3. For simplicity, we continue the notation convention \eqref{conv} from the preceding section, i.e. write
\[
 \overline{\Theta } \equiv \Theta = (\theta_1,\ldots , \theta_{M-1} ).
\]
\subsection{Case $M=2$}\label{sec_case2}
Here we consider the case $M=2$. In this case $\overline{\Theta }=\Theta = {\theta}_1 = \pi/2$ and \eqref{eq-l-1b}--\eqref{eq-l-2b} give
\[
\overline{\K } ({\Theta } ) = 1+ (-1)\cdot \ee^{-2i \theta_1 } =2, \qquad \overline{\H } ({\Theta }) = \overline{\H }_1 ({\Theta }) = (-1)\ee^{2i ({\theta}_1- {\theta }_0 ) }+1 =2
\]
In particular ${\Theta } \in \D$, as mentioned in Step 1, and $\overline{H}({\Theta }) =\overline{\H }_1 ({\Theta })/  \overline{\K } ({\Theta } ) =1$, as claimed in \eqref{h_is_1}. It remains to show \eqref{naF_is_inver} and \eqref{limit_at_angles}. The former follows by noting that
\[
\overline{H }' (\Theta ) = -2i \ee^{2i\theta_1 } =2i
\]
and so, taking imaginary part, we obtain $ \overline{F }'  ( \Theta )= 2 \ne 0$, which proves \eqref{naF_is_inver}. 

\subsection{The case of odd $M\geq 3$}\label{sec_case_mleq9}
Here we show that ${\Theta }\in \D$ and we verify \eqref{h_is_1} and \eqref{naF_is_inver} in the case of odd $M\geq 3$. Recall that in this section, for the sake of brevity, we continue to use the convention \eqref{conv}, as we are only concerned with nondegeneracy at $\Theta = \overline{\Theta }$.

Let us observe that \eqref{h_is_1} is an immediate consequence of \eqref{rr3} and \eqref{r1}.

In order to show \eqref{naF_is_inver} we first calculate explicitly the entries of $\nabla  \overline{F} (\Theta )$ and $\nabla \overline{G} (\Theta )$.
\begin{lemma} For every $l,m\in \{ 1, \ldots , M-1 \}$
\eqnb\label{grad_Finfty}
\sin^{2}(\theta_{1})[\nabla \overline{F } (\Theta)]_{lm}=\left\{\begin{aligned}
& 2\sin^{2}(\theta_{1}) - (-1)^{m}\sin(2\theta_{1})\sin(2\theta_{m}) && \text{ if } \quad l=m,\\[5pt]
& (-1)^{m+1}\sin(2\theta_{1})(\sin(2\theta_{m}) + (-1)^{l}\sin(2\theta_{l-m})) && \text{ if } \quad l<m,\\[5pt]
& (-1)^{m+1}\sin(2\theta_{1})(\sin(2\theta_{m}) - (-1)^{l}\sin(2\theta_{l-m})) && \text{ if } \quad m<l
\end{aligned}\right.
\eqne
and 
\eqnb\label{grad_Ginfty}
\sin^{2}(\theta_{1})[\nabla \overline{G} (\Theta)]_{lm}=\left\{\begin{aligned}
& (-1)^{m}\cos(2\theta_{m})\sin(2\theta_{1})  && \text{ if } \quad l=m,\\[5pt]
& (-1)^{m}\sin(2\theta_{1})(\cos(2\theta_{m}) - (-1)^{l}\cos(2\theta_{l-m})) && \text{ if } \quad l<m,\\[5pt]
& (-1)^{m}\sin(2\theta_{1})(\cos(2\theta_{m}) + (-1)^{l}\cos(2\theta_{l-m})) && \text{ if } \quad m<l.
\end{aligned}\right.
\eqne
\end{lemma}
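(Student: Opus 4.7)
The plan is to apply the quotient rule to $\overline{H}_l=\overline{\H}_l/\overline{\K}$ and exploit the identity $\overline{\H}_l(\overline{\Theta})=\overline{\K}(\overline{\Theta})$ already proved in \eqref{h_is_1} via \eqref{rr3}-\eqref{r1}. This collapses the derivative at $\Theta=\overline{\Theta}$ to the much simpler form
\[
\partial_{\theta_m}\overline{H}_l(\overline{\Theta})=\frac{\partial_{\theta_m}\overline{\H}_l(\overline{\Theta})-\partial_{\theta_m}\overline{\K}(\overline{\Theta})}{\overline{\K}(\overline{\Theta})}.
\]
Substituting $\theta_k=k\theta_1$ into \eqref{rr3} yields $\overline{\K}(\overline{\Theta})=-i\tan\theta_1$, so division by $\overline{\K}(\overline{\Theta})$ is the same as multiplication by $i\cos\theta_1/\sin\theta_1$. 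Consequently $\im\partial_{\theta_m}\overline{H}_l(\overline{\Theta})$ equals $(\cos\theta_1/\sin\theta_1)\,\re(\textrm{numerator})$ and $\re\partial_{\theta_m}\overline{H}_l(\overline{\Theta})$ equals $-(\cos\theta_1/\sin\theta_1)\,\im(\textrm{numerator})$. The $\sin(2\theta_1)/\sin^2\theta_1=2\cos\theta_1/\sin\theta_1$ factor visible in \eqref{grad_Finfty}-\eqref{grad_Ginfty} originates here, together with the identity $2\sin\theta_1\cos\theta_1=\sin(2\theta_1)$.

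Next I would compute the two numerators by differentiating \eqref{eq-l-1b}, \eqref{eq-l-2b} term-by-term. For $\overline{\K}$ only the $k=m$ summand depends on $\theta_m$, giving $\partial_{\theta_m}\overline{\K}(\overline{\Theta})=2i(-1)^m e^{-2im\theta_1}$. For $\overline{\H}_l$ the calculation splits into three cases. When $m<l$ only the $k=m$ term in the first sum of \eqref{eq-l-2b} depends on $\theta_m$, producing a single exponential $-2i(-1)^{l+m}e^{2i(l-m)\theta_1}$; when $m>l$ analogously only the $k=m$ term in the second sum contributes, with sign flipped. In the diagonal case $m=l$ every exponential $e^{2i(\theta_l-\theta_k)}$ with $k\neq l$ depends on $\theta_l$, and one is left with two alternating geometric progressions. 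These I would sum with exactly the identities underlying \eqref{rr2}-\eqref{rr1}, and close them up using $e^{2iM\theta_1}=1$, i.e.\ $M\theta_1=n\pi$, which is the defining property of $\overline{\Theta}$.

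With these three numerators in hand, substitution into the simplified quotient rule, multiplication by $\sin^2\theta_1$, and expansion via $e^{\pm2ij\theta_1}=\cos(2\theta_j)\pm i\sin(2\theta_j)$ should deliver \eqref{grad_Finfty}-\eqref{grad_Ginfty} directly. Concretely, factoring out $-2i(-1)^m e^{-2im\theta_1}$ from the combined numerator in the off-diagonal cases produces the bracket $(-1)^l e^{2i(l-m)\theta_1}\pm 1$ whose imaginary and real parts are exactly the combinations $\sin(2\theta_m)\mp(-1)^l\sin(2\theta_{l-m})$ and $\cos(2\theta_m)\mp(-1)^l\cos(2\theta_{l-m})$ appearing in the statement, with the overall sign $(-1)^{m+1}$ (resp.\ $(-1)^m$) coming from $-\partial_{\theta_m}\overline{\K}$.

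The main obstacle is the diagonal case $l=m$. The two alternating geometric progressions produced by $\partial_{\theta_l}\overline{\H}_l(\overline{\Theta})$ must telescope, via $e^{2iM\theta_1}=1$, into an expression that after combination with $-\partial_{\theta_l}\overline{\K}(\overline{\Theta})$ yields exactly the diagonal entry $2\sin^2\theta_1-(-1)^m\sin(2\theta_1)\sin(2\theta_m)$ of \eqref{grad_Finfty}; getting that boundary cancellation cleanly is the only nontrivial bookkeeping step. The off-diagonal cases, by contrast, amount to a one-line trigonometric manipulation once the quotient-rule reduction is in place.
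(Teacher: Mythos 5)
Your proposal is correct and follows essentially the same route as the paper: quotient rule on $\overline{H}_l=\overline{\H}_l/\overline{\K}$, term-by-term differentiation of \eqref{eq-l-1b}--\eqref{eq-l-2b}, the geometric sums \eqref{rr2}--\eqref{rr1} for the diagonal case, and extraction of real and imaginary parts against a real prefactor (here $\overline{\K}(\overline{\Theta})=-i\tan\theta_1$). Your use of $\overline{\H}_l(\overline{\Theta})=\overline{\K}(\overline{\Theta})$ to collapse the quotient rule to $(\partial_{\theta_m}\overline{\H}_l-\partial_{\theta_m}\overline{\K})/\overline{\K}$ is a mild streamlining of the paper's computation, which instead carries $\overline{\H}_l$ through the full quotient rule and normalizes by the real quantity $\tfrac12(1+\ee^{-2i\theta_{1}})(1+\ee^{2i\theta_{1}})\overline{\K}^2=-2\sin^2(\theta_1)$; I have checked that your reduction reproduces all three cases of \eqref{grad_Finfty}--\eqref{grad_Ginfty}, including the diagonal one.
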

\begin{proof}
We consider the following three cases. \\
{\bf Case 1.} If we assume that $1\le l\le M-1$ and $l+1\le m\le M-1$, then
\begin{align*}
&\left(\sum_{k=1}^{M-1}(-1)^{k+1}\ee^{-2i\theta_{k}}\right)^{2}\partial_{m} \overline{H}_{l}(\Theta) = 2i(-1)^{l+m}\ee^{2i(\theta_{l}-\theta_{m})}\left(\sum_{k=1}^{M-1}(-1)^{k+1} \ee^{-2i\theta_{k}}\right)\\
&\qquad +2i(-1)^{m+1}\ee^{-2i\theta_{m}}\left(1+\sum_{k=0}^{l-1}(-1)^{l+k}\ee^{2i(\theta_{l}-\theta_{k})} - \sum_{k=l}^{M-1}(-1)^{l+k}\ee^{2i(\theta_{l}-\theta_{k})}\right)\\
&=2i(-1)^{m+1}\ee^{-2i\theta_{m}}-2i(-1)^{l+m+1}\ee^{2i(\theta_{l}-\theta_{m})}+4i\sum_{k=0}^{l-1}(-1)^{l+k+m+1}\ee^{2i(\theta_{l}-\theta_{k}-\theta_{m})}.
\end{align*}
In view of \eqref{rr2}, we have
\begin{align*}
&\sum_{k=0}^{l-1}(-1)^{l+k+m+1}\ee^{2i(\theta_{l}-\theta_{k}-\theta_{m})} = -(-1)^{m+1}\ee^{-2i\theta_{m}}\frac{1-(-1)^{l}\ee^{2i\theta_{l}} + \ee^{2i\theta_{1}} - (-1)^{l}\ee^{2i\theta_{l+1}}}{(1+\ee^{-2i\theta_{1}})(1+\ee^{2i\theta_{1}})}
\end{align*}
which in turn implies that
\begin{equation}
\begin{aligned}\label{eq22}
&\frac{1}{2}(1+\ee^{-2i\theta_{1}})(1+\ee^{2i\theta_{1}})\left(\sum_{k=1}^{M-1}(-1)^{k+1}\ee^{-2i\theta_{k}}\right)^{2}\partial_{m} \overline{H}_{l}(\Theta)\\ 
&\ \ =i(2+\ee^{-2i\theta_{1}} + \ee^{2i\theta_{1}})\left((-1)^{m+1}\ee^{-2i\theta_{m}}-(-1)^{l+m+1}\ee^{2i\theta_{l-m}}\right) \\
&\ \ \qquad -2i(-1)^{m+1}\ee^{-2i\theta_{m}}\left(1-(-1)^{l}\ee^{2i\theta_{l}} + \ee^{2i\theta_{1}} - (-1)^{l}\ee^{2i\theta_{l+1}}\right) \\
& \ \ = i(-1)^{m+1}\ee^{-2i\theta_{m+1}}-i(-1)^{l+m+1}\ee^{2i\theta_{l-m-1}}-i(-1)^{m+1}\ee^{-2i\theta_{m-1}}+i(-1)^{l+m+1}\ee^{2i\theta_{l-m+1}}.
\end{aligned}
\end{equation}
On the other hand, by \eqref{rr3} we have
\begin{equation}
\begin{aligned}\label{eq_11}
\frac{1}{2}(1+\ee^{-2i\theta_{1}})(1+\ee^{2i\theta_{1}})\left(\sum_{k=1}^{M-1}(-1)^{k+1}\ee^{-2i\theta_{k}}\right)^{2} = - 2\sin^{2}(\theta_{1}).
\end{aligned}
\end{equation}
By \eqref{eq_11}, the prefactor of $\partial_{m} \overline{H}_{l}(\Theta)$ in  \eqref{eq22} is a real number. Therefore, using \eqref{eq_11} and taking the imaginary part of  \eqref{eq22} gives
\begin{align*}
&-2\sin^{2}(\theta_{1})\partial_{m} \overline{F}_{l}(\Theta) = -2\sin^{2}(\theta_{1})\mathrm{Im}\,\partial_{m} \overline{H}_{l}(\Theta) \\
&\qquad=(-1)^{m+1}(\cos(2\theta_{m+1})-\cos(2\theta_{m-1}))-(-1)^{l+m+1}(\cos(2\theta_{l-m-1})-\cos(2\theta_{l-m+1})) \\
&\qquad =-2(-1)^{m+1}\sin(2\theta_{m})\sin(2\theta_{1}) - 2(-1)^{l+m+1}\sin(2\theta_{l-m})\sin(2\theta_{1}) \\
&\qquad =-2(-1)^{m+1}\sin(2\theta_{1})(\sin(2\theta_{m}) + (-1)^{l}\sin(2\theta_{l-m}))
\end{align*}
and furthermore
\begin{align*}
&-2\sin^{2}(\theta_{1})\partial_{m} \overline{G}_{l}(\Theta) = -2\sin^{2}(\theta_{1})\mathrm{Re}\,\partial_{m} \overline{H}_{l}(\Theta) \\
&\qquad= (-1)^{m+1}(\sin(2\theta_{m+1})-\sin(2\theta_{m-1}))+(-1)^{l+m+1}(\sin(2\theta_{l-m-1})-\sin(2\theta_{l-m+1})) \\
&\qquad= 2(-1)^{m+1}\sin(2\theta_{1})\cos(2\theta_{m}) - 2(-1)^{l+m+1}\sin(2\theta_{1})\cos(2\theta_{l-m}) \\
&\qquad= 2(-1)^{m+1}\sin(2\theta_{1})(\cos(2\theta_{m}) - (-1)^{l}\cos(2\theta_{l-m})),
\end{align*}
as desired. 

{\bf Case 2.} If we assume that $1\le l\le M-1$ and $1\le m\le l-1$, then 
\begin{align*}
&\left(\sum_{k=1}^{M-1}(-1)^{k+1}\ee^{-2i\theta_{k}}\right)^{2}\partial_{m} \overline{H}_{l}(\Theta)  = -2i(-1)^{l+m}\ee^{2i(\theta_{l}-\theta_{m})}\left(\sum_{k=1}^{M-1}(-1)^{k+1}\ee^{-2i\theta_{k}}\right) \\
&\quad\qquad + 2i(-1)^{m+1}\ee^{-2i\theta_{m}}\left(1+\sum_{k=0}^{l-1}(-1)^{l+k}\ee^{2i(\theta_{l}-\theta_{k})} - \sum_{k=l}^{M-1}(-1)^{l+k}\ee^{2i(\theta_{l}-\theta_{k})}\right) \\
&\quad = 2i(-1)^{m+1}\ee^{-2i\theta_{m}}+2i(-1)^{l+m+1}\ee^{2i(\theta_{l}-\theta_{m})}-4i\sum_{k=l}^{M-1}(-1)^{l+k+m+1}\ee^{2i(\theta_{l}-\theta_{k}-\theta_{m})}.
\end{align*}
Observe that the formula \eqref{rr1} gives
\begin{align*}
&\sum_{k=l}^{M-1}(-1)^{l+k+m+1}\ee^{2i(\theta_{l}-\theta_{k}-\theta_{m})} = (-1)^{m+1}\ee^{-2i\theta_{m}}\frac{(-1)^{l}\ee^{2i\theta_{l+1}}+\ee^{2i\theta_{1}}+(-1)^{l}\ee^{2i\theta_{l}} + 1}{(1+\ee^{-2i\theta_{1}})(1+\ee^{2i\theta_{1}})}
\end{align*}
and consequently we have
\begin{equation}
\begin{aligned}\label{eeq-11}
&\frac{1}{2}(1+\ee^{-2i\theta_{1}})(1+\ee^{2i\theta_{1}})\left(\sum_{k=1}^{M-1}(-1)^{k+1}\ee^{-2i\theta_{k}}\right)^{2}\partial_{m} \overline{H}_{l}(\Theta)\\ 
&\ =i\left((-1)^{m+1}\ee^{-2i\theta_{m}}+(-1)^{l+m+1}\ee^{2i\theta_{l-m}}\right)\left(2+\ee^{-2i\theta_{1}}+\ee^{2i\theta_{1}}\right) \\
&\quad -2i\left((-1)^{m+l+1}\ee^{2i\theta_{l-m+1}} + (-1)^{m+1}\ee^{-2i\theta_{m-1}} + (-1)^{m+l+1}\ee^{2i\theta_{l-m}} + (-1)^{m+1}\ee^{-2i\theta_{m}}\right)\\
& \ =i(-1)^{m+1}\ee^{-2i\theta_{m+1}} + i(-1)^{l+m+1}\ee^{2i\theta_{l-m-1}}-i(-1)^{m+1}\ee^{-2i\theta_{m-1}}-i(-1)^{m+l+1}\ee^{2i\theta_{l-m+1}}. 
\end{aligned}
\end{equation}
Similarly as in Case 1, the prefactor for $\partial_{m} \overline{H}_{l}(\Theta)$ in  \eqref{eeq-11} is a real number. Therefore, using \eqref{eq_11} and taking the imaginary part of  \eqref{eeq-11} gives
\begin{align*}
&-2\sin^{2}(\theta_{1})\partial_{m}\overline{F}_{l}(\Theta) = -2\sin^{2}(\theta_{1})\mathrm{Im}\,\partial_{m}\overline{H}_{l}(\Theta)\\ 
&\qquad=(-1)^{m+1}(\cos(2\theta_{m+1})-\cos(2\theta_{m-1})) + (-1)^{l+m+1}(\cos(2\theta_{l-m-1})-\cos(2\theta_{l-m+1}))\\
&\qquad = -2(-1)^{m+1}\sin(2\theta_{m})\sin(2\theta_{1}) + 2(-1)^{l+m+1}\sin(2\theta_{l-m})\sin(2\theta_{1})\\
&\qquad = -2(-1)^{m+1}\sin(2\theta_{1})(\sin(2\theta_{m}) - (-1)^{l}\sin(2\theta_{l-m}))
\end{align*}
and furthermore
\begin{align*}
&-2\sin^{2}(\theta_{1})\partial_{m}\overline{G}_{l}(\Theta) = -2\sin^{2}(\theta_{1})\mathrm{Re}\,\partial_{m}\overline{H}_{l}(\Theta)\\ 
& \qquad = (-1)^{m+1}(\sin(2\theta_{m+1})-\sin(2\theta_{m-1})) - (-1)^{l+m+1}(\sin(2\theta_{l-m-1})-\sin(2\theta_{l-m+1})) \\
& \qquad = 2(-1)^{m+1}\sin(2\theta_{1})\cos(2\theta_{m}) + 2(-1)^{l+m+1}\sin(2\theta_{1})\cos(2\theta_{l-m}) \\
& \qquad = 2(-1)^{m+1}\sin(2\theta_{1})(\cos(2\theta_{m}) + (-1)^{l}\cos(2\theta_{l-m})),
\end{align*}
as desired. 

{\bf Case 3.} Let us assume that $1\le l\le M-1$ and $m=l$. Then we have
\begin{align*}
&\left(\sum_{k=1}^{M-1}(-1)^{k+1}\ee^{-2i\theta_{k}}\right)^{2}\partial_{m}\overline{H}_{m}(\Theta) \\
&\qquad = 2i\left(\sum_{k=0}^{m-1}(-1)^{m+k}\ee^{2i(\theta_{m}-\theta_{k})} - \sum_{k=m+1}^{M-1}(-1)^{m+k}\ee^{2i(\theta_{m}-\theta_{k})}\right)\left(\sum_{k=1}^{M-1}(-1)^{k+1}\ee^{-2i\theta_{k}}\right) \\
&\qquad\quad + 2i(-1)^{m+1}\ee^{-2i\theta_{m}}\left(\sum_{k=0}^{m-1}(-1)^{m+k}\ee^{2i(\theta_{m}-\theta_{k})} - \sum_{k=m+1}^{M-1}(-1)^{m+k}\ee^{2i(\theta_{m}-\theta_{k})}\right).
\end{align*}
In view of the formulas \eqref{rr3} and \eqref{eq33} it follows that
\begin{align*}
&\frac{1}{2}(1+\ee^{-2i\theta_{1}})(1+\ee^{2i\theta_{1}})\left(\sum_{k=1}^{M-1}(-1)^{k+1}\ee^{-2i\theta_{k}}\right)^{2}\partial_{m} \overline{H}_{m}(\Theta)\\ 
&\qquad =-\frac{4i\sin^{2}(2\theta_{1})}{\left(1+\ee^{-2i\theta_{1}}\right)\left(1+\ee^{2i\theta_{1}}\right)} + 2(-1)^{m+1}\ee^{-2i\theta_{m}}\sin(2\theta_{1}),
\end{align*}
which in turn, by \eqref{eq_11}, gives 
\begin{align*}
-2\sin^{2}(\theta_{1})\partial_{m} \overline{F}_{m}(\Theta) & = -2\sin^{2}(\theta_{1})\mathrm{Im}\,\partial_{m} \overline{H}_{m}(\Theta) \\
& =-\frac{4\sin^{2}(2\theta_{1})}{\left(1+\ee^{-2i\theta_{1}}\right)\left(1+\ee^{2i\theta_{1}}\right)} + 2(-1)^{m}\sin(2\theta_{m})\sin(2\theta_{1}) \\
&=-4\sin^{2}(\theta_{1})+ 2(-1)^{m}\sin(2\theta_{m})\sin(2\theta_{1})
\end{align*}
and furthermore
\begin{align*}
-2\sin^{2}(\theta_{1})\partial_{m} \overline{F}_{m}(\Theta)  = -2\sin^{2}(\theta_{1})\mathrm{Re}\,\partial_{m} \overline{H}_{m}(\Theta) 
 =2(-1)^{m+1}\cos(2\theta_{m})\sin(2\theta_{1}),
\end{align*}
as desired. 
\end{proof}
The formulas  \eqref{grad_Finfty}--\eqref{grad_Ginfty} obtained in the above lemma can now be used to verify that $\det \nabla \overline{F}(\Theta )\neq 0$ for odd $M\geq 3$ (recall the convention \eqref{conv}).  As mentioned in the introduction we verify it only for $M\in \{ 3,5,7,9\}$. We have   
\begin{align*}
\det\left[\sin^{2}(\theta_{1})\nabla\overline{F}( \Theta)\right] = \lambda_{1}\lambda_{2}\ldots\lambda_{M-1},
\end{align*}
where $\lambda_{1},\lambda_{2},\ldots,\lambda_{M-1}$ are the eigenvalues of $\sin^{2}(\theta_{1})\nabla\overline{F}(\Theta)$, which can be computed explicitly, as listed in Table~\ref{table_eigenvalues} below.

\begin{center}
\begin{tabular}{ |>{\centering}m{1cm}|>{\centering}m{1cm}|m{12cm}| } 
  \hline 
 $M$ & $n$ & \vspace{2pt} Eigenvalues $\lambda_{1},\lambda_{2},\ldots,\lambda_{M-1}$ of the matrix $\sin^{2}(\theta_{1})\nabla\overline{F}(\Theta)$ \\[2pt]
 \hline
 $3$ & $1$, $2$ &  \vspace{2pt} $\lambda_{1} = \lambda_{2} = 9/4$  \\[2pt]
 \hline 
 \multirow{3}{*}{$5$} & $1$&  \vspace{2pt} $\lambda_{1} = \lambda_{2} = 5/2$ \newline $\lambda_{3} = \lambda_{4} = -5\sqrt{5}/8+5/8$ \\[2pt]
 \cline{2-3}
& $2$ & \vspace{2pt} $\lambda_{1} = \lambda_{2} = 5/2$ \newline $\lambda_{3} = \lambda_{4} = 5\sqrt{5}/8+5/8$ \\[2pt]
 \hline 
 \multirow{3}{*}{$7$} & $1$ &  \vspace{2pt} $\lambda_{1} = \lambda_{2} = -2 \cos^{2}(\pi/7)+3\cos ( \pi/7  ) +3/2$ \newline
$\lambda_{3} = \lambda_{4} = -4 \cos^{2}(\pi/7)-\cos ( \pi/7  ) +3$ \newline
$\lambda_{5} = \lambda_{6} = - \cos^{2}( \pi/7  )-2\cos ( \pi/7  ) +5/2$  \\[2pt]
 \cline{2-3}
& $2$ & \vspace{2pt} $\lambda_{1} = \lambda_{2} = 5 \cos^{2}(\pi/7) - \cos ( \pi/7  )/2 -1/4$ \newline
$\lambda_{3} = \lambda_{4} = 6\cos^{2}(\pi/7)-2\cos ( \pi/7  ) - 1$ \newline
$\lambda_{5} = \lambda_{6} = - 4\cos^{2}( \pi/7  )-\cos ( \pi/7  ) + 3$   \\[2pt]
\hline
 \multirow{5}{*}{$9$} & $1$ &  \vspace{2pt} $\lambda_{1} = \lambda_{2}  = -2\cos^{2}(\pi/9)+2\cos(\pi/9) +5/2$ \newline
$\lambda_{3} = \lambda_{4}  = -2 \cos^{2}(\pi/9)-\cos(\pi/9) +5/2$ \newline
$\lambda_{5} = \lambda_{6}  = 3/2-3\cos(\pi/9)$ \newline
$\lambda_{7} = \lambda_{8} = -5 \cos^{2}(\pi/9)+2\cos(\pi/9) +5/2$ \\[2pt]
 \cline{2-3}
 & $2$&  \vspace{2pt} $\lambda_{1} = \lambda_{2}  = 4\cos^{2}(\pi/9)-\cos(\pi/9) - 1/2$ \newline
$\lambda_{3} = \lambda_{4}  = 6\cos^{2}(\pi/9)- 3/2$ \newline
$\lambda_{5} = \lambda_{6}  = \cos^{2}(\pi/9) - 5\cos(\pi/9)/2 -1/2$ \newline
$\lambda_{7} = \lambda_{8} = -2\cos^{2}(\pi/9)-\cos(\pi/9) +5/2$  \\[2pt]
 \hline 
\end{tabular}
 \nopagebreak\vspace{5pt}
 \captionof{table}{Exact formulas for the eigenvalues of the matrix $\sin^{2}(\theta_{1})\nabla\overline{F}(\Theta)$ for $M=3,5,7,9$ (recall \eqref{conv}).}\label{table_eigenvalues} 
\end{center}

Since none of the eigenvalues equals $0$ we see that $\det\left[\sin^{2}(\theta_{1})\nabla\overline{F}(\Theta)\right] \ne 0$, as required.\\

\subsection{The case of even $M\geq 4$}\label{sec_m_geq4}

Here we verify Remark~\ref{rem_m_geq4}, that is we comment on the reason why Theorem~\ref{thm_main} does not apply to any even $M\geq 4$. \\

In fact, let us suppose that $M \ge 4$ is even. Then for any $\Theta = (\theta_1, \ldots , \theta_{M-1} ) \in \R^{M-1}$ we have that $
\K (\Theta ) = \sum_{k=0}^{M-1} (-1)^k \ee^{-2i\theta_k }$, by \eqref{eq-l-1b}. Thus for $\overline{\Theta } = \overline{\theta_1} (1,2,\ldots , M-1 )$ we have $\K (\overline{\Theta }) = \sum_{k=0}^{M-1} ( - \ee^{-2i\overline{\theta}_1} )^k=0$ if $\overline{\theta}_1=n\pi /M$ (recall $n\in\{1,2\}$).  This shows that the admissibility condition \eqref{admissible} fails, and so one cannot apply the argument of Section~\ref{sec_sketch} to obtain any spirals with such $\overline{\Theta }$. 

Moreover, we note that 
\begin{equation*}
\begin{aligned}
\overline{F}_{l}(\Theta)&=\mathrm{Im}\,(\overline{\H}_{l}(\Theta)/\overline{\K} (\Theta)) = \mathrm{Im}\left(\frac{1 + \sum_{k=0}^{l-1}(-1)^{l+k}\ee^{2 i(\theta_{l}-\theta_{k})} + \sum_{k=l+1}^{M-1}(-1)^{l+k}\ee^{2 i(\theta_{l}-\theta_{k})}}{1 + \sum_{k=1}^{M-1}(-1)^{k}\ee^{-2 i\theta_{k}}} \right)\\
&=\mathrm{Im}\left((-1)^{l}\ee^{2 i\theta_{l}}\right) = (-1)^{l}\sin(2\theta_{l})
\end{aligned}
\end{equation*}
for each $l\in \{ 1,\ldots , M-1\}$ and any $\Theta\in\mathcal{D}$, 
where we recalled \eqref{eq-l-1b}--\eqref{eq-l-2b} in the first line. 
Thus $\overline{F} (\Theta)=0$ holds if and only if $2\theta_l \in \{ k \pi \colon k \in \Z \}$ for every $l\in \{ 1, \ldots , M-1 \}$. Due to the ordering $0< \theta_1 < \ldots < \theta_{M-1} < 2\pi$ we therefore must have that $M=4$ and $\theta_k = k\pi /2$, which in turn reduces to the case of Alexander's spirals \eqref{alex}. Thus no nonsymmetric spirals of the form claimed in Theorem~\ref{thm_main} can be obtained for even $M\geq 4$.

\section{Asymptotic expansions near $a=\infty $}\label{sec_expans}

In this section we prove asymptotic expansions \eqref{expan1}--\eqref{expan2}, namely
\[\begin{split}
\Theta(a) &= \overline{\Theta}   +\Theta_{-1} a^{-1} + o(a^{-1}), \\
G(a,\Theta(a)) &=  \overline{G} (\overline{\Theta })  +  G_{-1} a^{-1} + o(a^{-1}),
\end{split}\]
as  $a\to \infty$, where $\Theta \colon [a_0 ,\infty )\to \R^{M-1}$ is the solution map obtained in Step 4, that is $F(a,\Theta (a) )=0$ and $\Theta(a) \to \overline{\Theta }$ as $a\to\infty$, and 
\[
\begin{split}
\Theta_{-1} &\coloneqq -\nabla \overline{F}(\overline{\Theta })^{-1} I (\overline{\Theta }),\\
G_{-1} &\coloneqq R (\overline{\Theta }) +  \nabla \overline{G}(\overline{\Theta } )\Theta_{-1},
\end{split}
\]
recall \eqref{def_of_-1}.

To this end we differentiate $F(a,\Theta(a))=0$ with respect to $a$ to obtain 
\begin{align*}
\partial_{a}F(a,\Theta(a)) + \nabla F(a,\Theta(a))\partial_{a}\Theta(a) = 0, \quad a>a_{0}.
\end{align*}
Thus the asymptotic expansion \eqref{expan1} of $\Theta (a)$ can be obtained by an application of l'H\^opital rule,   
\begin{equation*}
\lim_{a\to\infty} a(\Theta(a) -\overline{\Theta} ) = \lim_{a\to\infty}(-a^{2} \partial_{a}\Theta(a)) = -\nabla \overline{F}(\overline{\Theta} )^{-1}\lim_{a\to\infty}(-a^{2} \partial_{a}F(a, \Theta (a) )) = \Theta_{-1},
\end{equation*}
where we recalled \eqref{R+iI} that the last limit equals $I(\overline{\Theta })$. 

As for the expansion \eqref{expan2} we first note the chain rule
\begin{align*}
\frac{\d }{\d a } G(a,\Theta(a)) = \partial_{a}G(a,\Theta(a)) + \nabla G(a,\Theta(a))\partial_{a}\Theta(a), \quad a>a_{0},
\end{align*}
and we use l'H\^opital rule again to obtain
\begin{equation*}
\begin{split}
  \lim_{a\to\infty} a(G(a,\Theta(a)) - \overline{G}(\overline{\Theta }) ) &= \lim_{a\to\infty}\left( -a^{2} \frac{\d }{\d a }G(a,\Theta(a)) \right) \\
&= \lim_{a\to\infty}(-a^{2}\partial_{a}G(a,{\Theta (a)})) + \nabla \overline{G} (\overline{\Theta }) \lim_{a\to\infty}(-a^{2} \partial_{a}\Theta(a)) \\
& = R(\overline{\Theta } ) + \nabla \overline{G}(\overline{\Theta } )\Theta_{-1} = G_{-1},
\end{split}
\end{equation*}
as required, where have also recalled \eqref{R+iI} for the definition of $R(\overline{\Theta })$.

\section{Linear independence of $E_1$, $E_2$}\label{sec_choice_g_mu}

As mentioned in Step 5 in the introduction, here we show linear independence of $E_1(a)$ and $E_2(a)$ (recall \eqref{eq-g-n_repeat1}), provided the nondegeneracy condition \eqref{naF_is_inver}, that is invertibility of $\nabla \overline{F} (\overline{\Theta })$. The case $M=2$ can be verified directly (Section~\ref{sec_caseM2}), while the case of odd $M\geq 3$ can be settled by expansion as $a\to \infty$ (Section~\ref{sec_case_Mgeq3}).

\subsection{Case $M=2$}\label{sec_caseM2}

In this case we have
\[
\H_l (a, \theta (a) ) =\cosh (\pi A) - \ee^{A(\pi - \theta (a))}\quad \text{ and } \quad \K (a,\theta (a) )= \cosh(\pi A) - \ee^{A(\theta(a) -\pi )}
\]
(recall \eqref{def_of_K,Hl}), where we have set $\theta (a) \coloneqq \Theta (a) = \theta_1 (a) $, for simplicity. We recall the definitions of $E_1(a),E_2(a)$ from \eqref{eq-g-n_repeat1}, i.e.
\[\begin{split}
E_1 (a)& = \frac{a}{\sinh (\pi A) } \left( \cosh^2 (\pi A ) - 1 \right) = a \sinh (\pi A),\\
E_2 (a) & = \left( i - a^{-1} \right) \left( \cosh (\pi A) - \ee^{A(\theta (a) - \pi )} \right). 
\end{split}
\]
 Noting that
\[\begin{split}
\ee^{\pm A\pi }&= \ee^{\pm A\pi \pm 2i\pi}= \ee^{\mp \frac{2\pi }{a+i}} = 1\mp 2\pi a^{-1} + O(a^{-2}),\\
\ee^{A\theta (a) }&= \ee^{-\frac{ai\pi}{a+i} + O(a^{-1})}=  \ee^{-\pi i  + O(a^{-1})}=  -1 +O(a^{-1})
\end{split}
\]
as $a\to \infty$ (due to the fact that $\theta(a) = \pi/2 + O(a^{-1})$, recall \eqref{expan1}), we have 
\[\sinh (A\pi ) = -2\pi a^{-1} + O(a^{-2})\quad \text{ and }\quad \cosh( A\pi ) = 1+O(a^{-2}) \qquad \text{ as }a\to \infty.
\]
Thus, as $a\to \infty$,
\[
\begin{split}
E_1 (a) &= -2\pi + o(1),\\
E_2 (a) &= (i -a^{-1} ) \left( 1+O(a^{-2}) -(-1+O(a^{-1}))(1+2\pi a^{-1} + o (a^{-1})) \right)  = 2i + O(a^{-1}),
\end{split}
\]
which are linearly independent for sufficiently large $a>0$.

\subsection{The case of odd $M\geq 3$}\label{sec_case_Mgeq3}

Here we show that, for sufficiently large $a>0$, and odd $M\geq 3$,
\begin{gather*}
E_{1}(a) = \frac{a}{\sinh(\pi A)}\left(\cosh(\pi A)\mathcal{K}(a,\Theta(a))+ \sum_{l=1}^{M-1} \mathcal{H}_{l}(a,\Theta(a))e^{A(\theta_{l}(a)-\pi)} \right),\\
E_{2}(a) = \left(i - \frac{1}{a}\right)\mathcal{K}(a,\Theta(a))
\end{gather*}
(recall \eqref{eq-g-n_repeat1}) are linearly independent vectors in the complex plane $\C$ given \eqref{naF_is_inver} holds.
We first simplify the expression for $E_1 (a)$ and find an expansion for $E_2 (a)$.
\begin{proposition}\label{prop_aa}
We have 
\eqnb\label{asym-11aa}
\begin{split}
E_1 (a) &= a \cosh(\pi A) = a + 2\pi^2 a^{-1} + o(a^{-1}), \\
E_{2}(a) &= E_{2,0}+E_{2,-1}a^{-1} + o(a^{-1})
\end{split}
\eqne
as $a\to \infty$, where  $E_{2,0}\coloneqq 2\sin(2\theta_{1})(1+\ee^{-2i\theta_{1}})^{-1}(1+\ee^{2i\theta_{1}})^{-1}$ and 
\begin{align}\label{def-e}
E_{2,-1} \coloneqq -\overline{\mathcal{K}}(\overline{\Theta}) + i\lim_{a\to \infty}[-a^{2}\partial_{a}\mathcal{K}(a,\overline{\Theta})]
+i\lim_{a\to \infty}\nabla_{\Theta}\mathcal{K}(a,\overline{\Theta})\cdot \lim_{a\to \infty}[-a^{2}\partial_{a}\Theta(a)].
\end{align}
In particular, $E_1(a)$, $E_2 (a)$ are linearly independent for sufficiently large $a>0$ if $\im \, E_{2,-1} \ne 0$.
\end{proposition}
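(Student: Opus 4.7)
The plan is to first establish the algebraic identity
\begin{equation*}
\cosh(\pi A)\,\K(a,\Theta) + \sum_{l=1}^{M-1}\H_l(a,\Theta)\,\ee^{A(\theta_l-\pi)} = \cosh(\pi A)\sinh(\pi A),
\end{equation*}
valid for odd $M\geq 3$, arbitrary $\Theta\in\R^{M-1}$, and $a>0$, which collapses $E_1(a)$ to $a\cosh(\pi A)$; then to Taylor-expand $\cosh(\pi A)$ in $a^{-1}$ and $E_2(a)=(i-1/a)\K(a,\Theta(a))$ about $a=\infty$ using the $O(a^{-1})$ expansion of $\Theta(a)$ from Section~\ref{sec_expans}; and finally to deduce the linear independence of $E_1(a)$, $E_2(a)$ by comparing orders of $\im E_j(a)$.

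To prove the identity, insert $(-1)^M=-1$ into \eqref{def_of_K,Hl} and multiply $\H_l$ by $\ee^{A(\theta_l-\pi)}$ to obtain
\begin{equation*}
\H_l\,\ee^{A(\theta_l-\pi)} = \cosh(\pi A)\,\ee^{A(\theta_l-\pi)} + \sum_{k=0}^{l-1}(-1)^{l+k}\ee^{A\theta_k} - \ee^{-2A\pi}\sum_{k=l}^{M-1}(-1)^{l+k}\ee^{A\theta_k}.
\end{equation*}
Summing over $l=1,\ldots,M-1$ and swapping the order of summation, the two inner alternating sums $\sum_{l=k+1}^{M-1}(-1)^{l+k}$ and $\sum_{l=1}^{k}(-1)^{l+k}$ both vanish for $k$ even (because $M-1$ is even) and equal $-1$ and $+1$ respectively for $k$ odd; the odd indices in the two ranges coincide as $k=1,3,\ldots,M-2$. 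The two double sums therefore collapse into $-(1+\ee^{-2A\pi})\sum_{k\text{ odd}}\ee^{A\theta_k} = -2\cosh(\pi A)\sum_{k\text{ odd}}\ee^{A(\theta_k-\pi)}$, which together with the single term $\cosh(\pi A)\sum_{l=1}^{M-1}\ee^{A(\theta_l-\pi)}$ produces $\cosh(\pi A)\sum_{l=1}^{M-1}(-1)^l\ee^{A(\theta_l-\pi)}$. Adding $\cosh(\pi A)\K = \cosh(\pi A)\sinh(\pi A) - \cosh(\pi A)\sum_{k=1}^{M-1}(-1)^k\ee^{A(\theta_k-\pi)}$ then exactly cancels this residue and leaves $\cosh(\pi A)\sinh(\pi A)$, giving $E_1(a)=a\cosh(\pi A)$. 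The expansion of $\cosh(\pi A)$ is cleanest via the identity $\pi A = -2\pi i - 2\pi/(a+i)$: by $2\pi i$-periodicity of $\cosh$ and Taylor-expanding $1/(a+i)=1/a - i/a^2 + O(a^{-3})$ one gets $\cosh(\pi A)=1+2\pi^2 a^{-2}+O(a^{-3})$, hence the claimed expansion of $E_1(a)$.

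For $E_2(a)$, decompose
\begin{equation*}
\K(a,\Theta(a))-\overline{\K}(\overline{\Theta}) = \bigl[\K(a,\Theta(a))-\K(a,\overline{\Theta})\bigr] + \bigl[\K(a,\overline{\Theta})-\overline{\K}(\overline{\Theta})\bigr].
\end{equation*}
The first bracket equals $a^{-1}\nabla_{\Theta}\overline{\K}(\overline{\Theta})\cdot\Theta_{-1}+o(a^{-1})$ by the mean value theorem applied along the segment from $\overline{\Theta}$ to $\Theta(a)$, the expansion $\Theta(a)-\overline{\Theta}=\Theta_{-1}a^{-1}+o(a^{-1})$ from \eqref{expan1}, and the uniform convergence $\nabla_{\Theta}\K(a,\overline{\Theta})\to\nabla_{\Theta}\overline{\K}(\overline{\Theta})$ established in Section~\ref{sec_conv_claims}. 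The second bracket is handled by l'H\^opital's rule, which yields $a\,[\K(a,\overline{\Theta})-\overline{\K}(\overline{\Theta})]\to\lim_{a\to\infty}[-a^{2}\partial_{a}\K(a,\overline{\Theta})]$, a limit that exists by direct differentiation of \eqref{def_of_K,Hl}. Multiplying the resulting $O(a^{-1})$ expansion of $\K(a,\Theta(a))$ by $(i-1/a)$ and collecting the constant and $a^{-1}$ terms yields $E_{2,0}=i\overline{\K}(\overline{\Theta})$ and the displayed formula for $E_{2,-1}$, after identifying $\Theta_{-1}=\lim[-a^{2}\partial_{a}\Theta(a)]$ from Section~\ref{sec_expans}. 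The explicit value of $E_{2,0}$ then follows from $\overline{\K}(\overline{\Theta})=-2i\sin(2\theta_1)/[(1+\ee^{-2i\theta_1})(1+\ee^{2i\theta_1})]$ derived in \eqref{rr3}.

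Finally, since $\cosh(\pi A)$ is real modulo $O(a^{-3})$, we have $\im E_1(a)=O(a^{-2})$, whereas $E_{2,0}$ is real, so $\im E_2(a)=\im(E_{2,-1})\,a^{-1}+o(a^{-1})$. A real linear relation $\alpha E_1(a)+\beta E_2(a)=0$ therefore forces, after taking imaginary parts, multiplying by $a$, and sending $a\to\infty$, the constraint $\beta\,\im(E_{2,-1})=0$, so $\beta=0$ by hypothesis; and then $\alpha E_1(a)=0$ for all large $a$, so $\alpha=0$ since $\re E_1(a)\sim a\to\infty$. The main obstacle is the algebraic identity of the second paragraph: the combinatorics of the two alternating double sums requires careful parity bookkeeping, but is entirely elementary, and once it is in hand the remainder of the proposition reduces to routine application of the asymptotic machinery already developed in Sections~\ref{sec_conv_claims} and~\ref{sec_expans}.
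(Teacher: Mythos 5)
Your proof is correct and follows essentially the same route as the paper: the identity $E_1(a)=a\cosh(\pi A)$ is obtained by the same swap of the order of summation and parity bookkeeping in the two double sums, and the expansion of $E_2$ rests on the same l'H\^opital/chain-rule computation (the paper differentiates $E_2$ directly rather than splitting $\K(a,\Theta(a))-\overline{\K}(\overline{\Theta})$ into two brackets, but this is cosmetic). The only point to tighten is the final step: since the coefficients $\alpha,\beta$ of a putative linear relation may depend on $a$, phrase the conclusion via the determinant $\re E_1\,\im E_2-\im E_1\,\re E_2\to \im E_{2,-1}\neq 0$ (using $\im E_1=O(a^{-2})$ and $\re E_2\to E_{2,0}\in\R$) rather than by sending $a\to\infty$ in a relation assumed to hold for all large $a$; this is a trivial rephrasing of what you already computed.
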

\begin{proof}
Let us observe that by \eqref{def_of_K,Hl} we have
\begin{equation}
\begin{aligned}\label{eq-11}
&\cosh(\pi A)\mathcal{K}(a,\Theta(a))+ \sum_{l=1}^{M-1} \mathcal{H}_{l}(a,\Theta(a))\ee^{A(\theta_{l}(a)-\pi)} \\
&= \cosh(\pi A)\sinh(\pi A) - \cosh(\pi A)\left(\sum_{k=1}^{M-1}(-1)^{k}\ee^{A(\theta_{k}(a)-\pi)}\right) + \cosh(\pi A)\left(\sum_{k=1}^{M-1}\ee^{A(\theta_{k}(a)-\pi)}\right) \\
&\qquad +\sum_{l=1}^{M-1}\sum_{k=0}^{l-1}(-1)^{l+k}\ee^{A\theta_{k}(a)} - \sum_{l=1}^{M-1}\sum_{k=l}^{M-1}(-1)^{l+k}\ee^{A(\theta_{k}(a)-2\pi)}.
\end{aligned}
\end{equation}
Since $M\ge 3$ is an odd number, we obtain
\begin{equation}
\begin{aligned}\label{eq-22}
&\sum_{l=1}^{M-1}\sum_{k=0}^{l-1}(-1)^{l+k}\ee^{A\theta_{k}(a)} = \sum_{l=1}^{M-1}\sum_{k=0}^{M-1} 1_{\{0\le k\le l-1\}}(k)(-1)^{l+k}\ee^{A\theta_{k}(a)} \\
& \qquad = \sum_{k=0}^{M-1} (-1)^{k}\ee^{A\theta_{k}(a)} \sum_{l=k+1}^{M-1}(-1)^{l} = -\sum_{k=0, \, 2\nmid k}^{M-1} \ee^{A\theta_{k}(a)}
\end{aligned}
\end{equation}
and similarly
\begin{equation}
\begin{aligned}\label{eq-33}
&\sum_{l=1}^{M-1}\sum_{k=l}^{M-1}(-1)^{l+k}\ee^{A(\theta_{k}(a)-2\pi)} = \sum_{l=1}^{M-1}\sum_{k=1}^{M-1}1_{\{l\le k\le M-1\}}(k)(-1)^{l+k}\ee^{A(\theta_{k}(a)-2\pi)} \\
&\qquad= \sum_{k=1}^{M-1}(-1)^{k}\ee^{A(\theta_{k}(a)-2\pi)}\sum_{l=1}^{k}(-1)^{l} 
= \sum_{k=1,\, 2\nmid k}^{M-1}\ee^{A(\theta_{k}(a)-2\pi)}.
\end{aligned}
\end{equation}
Combining \eqref{eq-11}, \eqref{eq-22} and \eqref{eq-33} gives 
\begin{align*}
& \cosh(\pi A)\mathcal{K}(a,\Theta(a))+ \sum_{l=1}^{M-1} \mathcal{H}_{l}(a,\Theta(a))\ee^{A(\theta_{l}(a)-\pi)} \\
&\qquad =\cosh(\pi A)\sinh(\pi A) + 2\cosh(\pi A)\left(\sum_{k=1,\, 2\nmid k}^{M-1}\ee^{A(\theta_{k}(a)-\pi)}\right) 
 -\sum_{k=0, \, 2\nmid k}^{M-1} \ee^{A\theta_{k}(a)} -\sum_{k=1,\, 2\nmid k}^{M-1}\ee^{A(\theta_{k}(a)-2\pi)} \\
&\qquad = \cosh(\pi A)\sinh(\pi A) + 2\cosh(\pi A)\ee^{-A\pi} \left(\sum_{k=1,\, 2\nmid k}^{M-1}\ee^{A\theta_{k}(a)}\right) 
 -\sum_{k=0, \, 2\nmid k}^{M-1}\ee^{A\theta_{k}(a)} -\ee^{-2A\pi} \sum_{k=1,\, 2\nmid k}^{M-1}\ee^{A\theta_{k}(a)} \\
&\qquad = \cosh(\pi A)\sinh(\pi A)+ \left(2\cosh(\pi A)\ee^{-A\pi} -1 -\ee^{-2A\pi} \right)\left(\sum_{k=1,\, 2\nmid k}^{M-1}\ee^{A\theta_{k}(a)}\right) \\
&\qquad = \cosh(\pi A)\sinh(\pi A),
\end{align*}
which implies that $E_{1}(a) = a\cosh(\pi A)$, and so, since $A\pi = -2i \pi -2\pi / (a+i)$,
 \[
\begin{split}
E_1(a) & = a \cosh (A\pi) = a \cosh (-2\pi/(a+i)) \\
& = a( 1+ 2\pi^{2}/(a+i)^2 + O((a+i)^{-4}) \\
& = a + 2\pi^{2}a^{-1} + o(a^{-1})
\end{split}
\] 
as $a\to \infty$, as required. 

As for $E_2 (a)$, since $\mathcal{K}(a,\Theta(a))\to \overline{\mathcal{K}}(\overline{\Theta})$ by the equation \eqref{rr3}, we have 
\begin{align*}
E_{2}(a)\to E_{2,0} = i \overline{\mathcal{K}}(\overline{\Theta}) =\frac{2\sin(2\theta_{1})}{(1+\ee^{-2i\theta_{1}})(1+\ee^{2i\theta_{1}})},\quad a\to \infty.
\end{align*}
We note that
\begin{align*}
\partial_{a}E_{2}(a) & =\frac{1}{a^{2}}\mathcal{K}(a,\Theta(a)) + \left(i-\frac{1}{a}\right)\partial_{a}\mathcal{K}(a,\Theta(a))
+\left(i-\frac{1}{a}\right)\nabla_{\theta}\mathcal{K}(a,\Theta(a))\cdot\partial_{a}\Theta(a)
\end{align*}
and consequently
\begin{align*}
E_{2,-1}& =\lim_{a\to \infty} a(E_{2}(a) - E_{2,0}) = \lim_{a\to \infty}[-a^{2}\partial_{a}E_{2}(a)] \\
&= -\overline{\mathcal{K}}(\overline{\Theta}) + i\lim_{a\to \infty}[-a^{2}\partial_{a}\mathcal{K}(a,\overline{\Theta})]
+i\nabla_{\theta}\mathcal{K}(a,\overline{\Theta})\cdot \lim_{a\to \infty}[-a^{2}\partial_{a}\Theta(a)] ,
\end{align*}
as desired. The last claim of the proposition follows from the fact that $E_{2,0}\in \R$. 
\end{proof}
Thanks to the above proposition we obtain the required linear independence of $E_1(a)$, $E_2 (a)$ by showing that $\mathrm{Im}\,E_{2,-1} \neq 0$.

\begin{proposition}\label{prop-e_2_1_n_0}
Given $M\geq 3$ is an odd integer such that \eqref{naF_is_inver} holds then $\mathrm{Im}\, E_{2,-1} \neq 0$. 
\end{proposition}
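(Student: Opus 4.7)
The plan is to show that $\mathrm{Im}\, E_{2,-1}$ has the form $A + B\pi$ for an algebraic real number $A \ne 0$ and an algebraic real number $B$, and then invoke the transcendence of $\pi$. To this end, following \eqref{def-e}, I decompose $E_{2,-1} = T_1 + T_2 + T_3$ with
\[
T_1 := -\overline{\mathcal{K}}(\overline{\Theta}), \qquad T_2 := i\lim_{a\to\infty}\bigl[-a^{2}\partial_{a}\mathcal{K}(a,\overline{\Theta})\bigr], \qquad T_3 := i\,\nabla_{\Theta}\overline{\mathcal{K}}(\overline{\Theta})\cdot \Theta_{-1},
\]
where $\Theta_{-1} = \lim_{a\to\infty}[-a^{2}\partial_{a}\Theta(a)]$. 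The term $\mathrm{Im}\,T_1$ will produce $A$, while $\mathrm{Im}\,T_2 + \mathrm{Im}\,T_3$ will produce $B\pi$.

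For $T_1$ I would use the formula \eqref{rr3} together with the identity $(1+\ee^{-2i\theta_{1}})(1+\ee^{2i\theta_{1}}) = 4\cos^{2}\theta_{1}$, which reduces $\overline{\mathcal{K}}(\overline{\Theta})$ to $-i\tan(n\pi/M)$; hence $\mathrm{Im}\,T_{1} = \tan(n\pi/M)$. For odd $M\ge 3$ and $n\in\{1,2\}$ the angle $n\pi/M$ lies in $(0,2\pi/3]$ and cannot equal $\pi/2$, so $\tan(n\pi/M)$ is a nonzero algebraic real number. For $T_2$, substituting \eqref{asy-k} and again using the same cosine identity yields a quantity of the form $\pi$ times a rational function in the root of unity $\ee^{-2in\pi/M}$, so $\mathrm{Im}\,T_{2}$ is $\pi$ times an algebraic real.

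The third term $T_3$ is where the nondegeneracy hypothesis \eqref{naF_is_inver} enters. By \eqref{limit_at_angles}, each component of $I(\overline{\Theta})$ is $\pi$ times an algebraic real number, so $I(\overline{\Theta}) = \pi\vec\alpha$ with $\vec\alpha$ an algebraic real vector. The matrix $\nabla\overline{F}(\overline{\Theta})$ has algebraic real entries by \eqref{grad_Finfty}, and by \eqref{naF_is_inver} it is invertible, so $\nabla\overline{F}(\overline{\Theta})^{-1}$ again has algebraic real entries; hence $\Theta_{-1} = -\nabla\overline{F}(\overline{\Theta})^{-1}I(\overline{\Theta}) = \pi\vec\beta$ with $\vec\beta$ algebraic. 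Differentiating \eqref{eq-l-1b} gives $\partial_{\theta_{m}}\overline{\mathcal{K}}(\overline{\Theta}) = 2i(-1)^{m}\ee^{-2i\overline{\theta}_{m}}$, whose entries are algebraic, so $\mathrm{Im}\,T_{3}$ is $\pi$ times an algebraic real.

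Combining the three contributions yields $\mathrm{Im}\,E_{2,-1} = \tan(n\pi/M) + \pi\gamma$ for some algebraic real $\gamma$. If this vanished, then either $\gamma=0$, forcing $\tan(n\pi/M)=0$ (which fails), or else $\pi = -\tan(n\pi/M)/\gamma$ would be algebraic, contradicting the transcendence of $\pi$. The main obstacle is bookkeeping: one must ensure that the single factor of $\pi$ appearing in $I(\overline{\Theta})$ via \eqref{limit_at_angles} propagates cleanly through the linear algebra to produce the clean decomposition $A + B\pi$ with $A,B$ algebraic, and this is exactly what the invertibility assumption \eqref{naF_is_inver} (which keeps $\nabla\overline{F}(\overline{\Theta})^{-1}$ within the algebraic numbers) guarantees.
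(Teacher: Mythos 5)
Your proposal is correct and follows essentially the same route as the paper: both split $E_{2,-1}$ via \eqref{def-e} into the term $-\overline{\mathcal{K}}(\overline{\Theta})$, whose imaginary part is a nonzero real algebraic number (indeed $\tan(n\pi/M)$, nonzero since $M$ is odd), and the two limit terms, which lie in $\pi\mathbb{A}$ because \eqref{asy-k}, \eqref{limit_at_angles}, \eqref{grad_Finfty} and the invertibility \eqref{naF_is_inver} keep everything inside the algebraic numbers; both then conclude by the transcendence of $\pi$. The only difference is presentational (you write $\mathrm{Im}\,E_{2,-1}=A+B\pi$ directly, the paper phrases it as a contradiction), so no further comment is needed.
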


Before we prove Proposition~\ref{prop-e_2_1_n_0}, we recall that a complex number is algebraic provided it is a root of a non-zero polynomial with rational coefficients. We recall that the set $\mathbb{A}$ of algebraic numbers equipped with complex addition and multiplication is a field, which is closed under conjugation $\overline{z}$ for $z\in\C$ (see \cite[Theorem 50]{MR0638719} or \cite[p. 174]{MR0356988}). Therefore, if $z\in\mathbb{A}$ then both its real part $\mathrm{Re}\, z = (z+\overline{z})/2$ and imaginary part $\mathrm{Im}\, z = (z-\overline{z})/2i$  are also elements of the field. Observe also that $\mathbb{A}$ contains the field $\Q$ of the rational numbers.

\begin{proof}[Proof of Proposition~\ref{prop-e_2_1_n_0}.]
Suppose that  $\mathrm{Im}\,E_{2,-1} \ne 0$. We first show that 
\begin{align}\label{eq-alg-1}
i\lim_{a\to \infty}[-a^{2}\partial_{a}\mathcal{K}(a,\overline{\Theta})]
+i\lim_{a\to \infty}\nabla_{\theta}\mathcal{K}(a,\overline{\Theta})\cdot\Theta_{-1} \in\pi\mathbb{A}.
\end{align}
Indeed, let us observe that $\ee^{i\overline{\theta}_k}=\ee^{kn\pi i/M}\in \mathbb{A}$ for $k\in \{ 1,\ldots , M-1 \} $ and $n\in\{1,2\}$ as a root of the polynomial $z^{2M}-1$. Therefore $\cos(\overline{\theta}_k) = \mathrm{Re}\,\ee^{i\overline{\theta}_k}$ and $\sin(\overline{\theta}_k) = \mathrm{Im}\,\ee^{i\overline{\theta}_k}$  are also elements of $\mathbb{A}$. In particular, by  \eqref{grad_Finfty}, the entries of the gradient matrix $\nabla\overline{F}(\overline{\Theta})$ are also algebraic numbers. Consequently, since this matrix is invertible by assumption \eqref{naF_is_inver}, the  definition of the inverse matrix shows that the entries of $\nabla\overline{F}(\overline{\Theta})^{-1}$ are algebraic as well. From  \eqref{limit_at_angles} it follows that $I(\overline{\Theta})/\pi \in\mathbb{A}$, which in turn gives 
\begin{align}\label{eq-alg}
\Theta_{-1}/\pi = -\nabla \overline{F}(\overline{\Theta })^{-1} I (\overline{\Theta }) /\pi \in \mathbb{A}.
\end{align}
Moreover \eqref{def_of_K,Hl} shows that 
\begin{align*}
\lim_{a\to \infty}\nabla_{\theta_{k}}\mathcal{K}(a,\overline{\Theta}) = 2i (-1)^{k}\ee^{-2i\overline{\theta}_{k}},\quad 1\le k\le M-1,
\end{align*}
which, combined with \eqref{eq-alg}, yields 
\begin{align}\label{eq-alg-2}
i\lim_{a\to \infty}\nabla_{\theta}\mathcal{K}(a,\overline{\Theta})\cdot \Theta_{-1}/\pi \in\mathbb{A}.
\end{align}
Furthermore, by \eqref{asy-k} we have $i\lim_{a\to\infty}\left[-a^{2}\partial_{a} \mathcal{K} (a,\overline{\Theta } )\right]/\pi \in\mathbb{A}$, which together with \eqref{eq-alg-2} gives \eqref{eq-alg-1}, as desired. Hence, in view of the assumption $\mathrm{Im}\,E_{2,-1} = 0$ and the equation \eqref{def-e}, we have
\begin{align*}
\mathrm{Im}\,\overline{\mathcal{K}}(\overline{\Theta})  = \mathrm{Im}\,\left(i\lim_{a\to \infty}[-a^{2}\partial_{a}\mathcal{K}(a,\overline{\Theta})]
+i\lim_{a\to \infty}\nabla_{\theta}\mathcal{K}(a,\overline{\Theta})\cdot \lim_{a\to \infty}[-a^{2}\partial_{a}\Theta(a)]\right) \in\pi\mathbb{A}.
\end{align*}
On the other hand,  \eqref{rr3} implies that
\begin{align*}
\mathrm{Im}\, \overline{\mathcal{K}}(\overline{\Theta}) = -\frac{2\sin(2\theta_{1})}{(1+\ee^{-2i\theta_{1}})(1+\ee^{2i\theta_{1}})}\in\mathbb{A}\setminus\{0\},
\end{align*}
which gives $\pi\in\mathbb{A}$. This is a contradiction because $\pi$ is a transcendental number by the Lindemann-Weierstrass theorem (see \cite{MR1510165}, \cite{Weierstrass}). 
\end{proof}

\section{Appendix}

In this section we verify some statements referred to in the introduction.

\subsection{Appendix - Invertibility of $\mathcal{A}$}\label{sec_app1}

Here we show that, if $r\in \C \setminus \{ 0,\infty \}$, $M\in \N$, $r_1, \ldots , r_M \in \C \setminus \{ 0, \infty \}$, and 
\[
\mathcal{A} \coloneqq \begin{pmatrix}  
\frac{r+r^{-1}}2  & r \frac{r_1 }{r_2} & r \frac{r_1}{r_3} & \ldots & r \frac{r_1}{r_M} \\ r^{-1} \frac{r_2}{r_1} & \frac{r + r^{-1}}2 & r \frac{r_2}{r_3} &\ldots & r \frac{r_2}{r_M}  \\
r^{-1} \frac{r_3}{r_1} & r^{-1} \frac{r_3}{r_2} & \frac{r+r^{-1} }2 & \ldots & r \frac{r_3}{r_M} \\
\vdots & \vdots & \vdots & \ddots & \vdots \\
r^{-1} \frac{r_M}{r_1} & r^{-1} \frac{r_M}{r_2} &r^{-1} \frac{r_M}{r_3} &\ldots & \frac{r+r^{-1}}2
\end{pmatrix}
\]
then
\eqnb\label{det_of_A}
\det \mathcal{A} = \left( \frac{r-r^{-1}}2 \right)^{2p} \left( \frac{r+r^{-1}}2 \right)^q,
\eqne
where $M=2p+q$, $q\in \{ 0,1\}$. In particular $\mathcal{A}$ is invertible if and only if $r\not \in \{ 1,-1,i,-i \}$. Thus taking $r\coloneqq \ee^{\pi A}$, where $A=-2ai/(a+i)$ (recall \eqref{def_of_A}), we see that $\frac{2A}{i} = \frac{-4a}{a+i}$ is not an integer for any $a\in (0,\infty )$, and therefore \eqref{matrix-a} is invertible for every $a\in (0,\infty )$, as mentioned in the introduction (below \eqref{def_r_rk}).\\

In order to prove \eqref{det_of_A} we we will show that $\mathcal{A}$ admits the following $LU$ decomposition, $\mathcal{A} =LU$, where
\[
L\coloneqq   \begin{pmatrix}  
1  & 0  & 0 & 0& \ldots & 0 \\ \frac{r_2}{r_1} \frac{2r^{-1}}{r+r^{-1}} & 1 & 0 &0&\ldots & 0  \\
 \frac{r_3}{r_1} \frac{2r^{-1}}{r+r^{-1}}&  \frac{r_3}{r_2} \frac{2(r^{-1}+1)(r^{-1}-1)}{(r-r^{-1})^2}   &1  &0& \ldots & 0 \\
  \frac{r_4}{r_1} \frac{2r^{-1}}{r+r^{-1}} & \frac{r_4}{r_2} \frac{2(r^{-1}+1)(r^{-1}-1)}{(r+r^{-1})^2} & \frac{r_4}{r_3} \frac{2r^{-1} }{r+r^{-1}}& 1 & \ldots & 0\\
\vdots & \vdots & \vdots &\vdots & \ddots & \vdots \\
 \frac{r_M}{r_1} \frac{2r^{-1}}{r+r^{-1}} & \frac{r_M}{r_2} \frac{2(r^{-1}+1)(r^{-1}-1)}{(r+r^{-1})^2} & \frac{r_M}{r_3} \frac{2r^{-1} }{r+r^{-1}}& \frac{r_M}{r_4} \frac{2(r^{-1}+1)(r^{-1}-1)}{(r+r^{-1})^2} & \ldots & 1
\end{pmatrix}
\]
and
\[U\coloneqq 
 \begin{pmatrix}  
\frac{r+r^{-1}}2  &  \frac{r_1 }{r_2}r &  \frac{r_1}{r_3}r & \frac{r_1}{r_4} r& \ldots &  \frac{r_1}{r_M} r\\  0&  \frac{(r-r^{-1})^2}{2(r+r^{-1})} &  \frac{r_2}{r_3} \frac{(r+1)(r-1)}{r+r^{-1}} &\frac{r_2}{r_4} \frac{(r+1)(r-1)}{r+r^{-1}} &\ldots &  \frac{r_2}{r_M} \frac{(r+1)(r-1)}{r+r^{-1}}  \\
0 & 0 & \frac{r+r^{-1} }2 &  \frac{r_3}{r_4} r& \ldots &  \frac{r_3}{r_M} r\\
0 & 0 & 0  &  \frac{(r-r^{-1})^2}{2(r+r^{-1})}&\ldots & \frac{r_4}{r_M} \frac{(r+1)(r-1)}{r+r^{-1}}  \\
\vdots & \vdots & \vdots & \vdots & \ddots & \vdots \\
0 & 0 &0 &0 &\ldots & \frac{r+r^{-1}}2 
\end{pmatrix}
\]
if $M$ is odd. (Otherwise, if $M$ is even, the last term in $U$ is replaced by $\frac{(r-r^{-1})^2}{2(r+r^{-1})}$.)

In other words
\[
L_{kj} = \begin{cases} 
0\hspace{2cm} &j>k,\\
1& j=k,\\
\frac{r_k}{r_j}\frac{2r^{-1}}{r+r^{-1}} & j<k, j\text{ odd},\\
\frac{r_k}{r_j}\frac{2(r^{-1}+1)(r^{-1}-1)}{(r-r^{-1})^2} & j<k, j\text{ even},
\end{cases}\quad U_{jm}=\begin{cases} 
0\hspace{2cm} &j>m,\\
\frac{r+r^{-1}}2& j=m \text{ is odd},\\
\frac{(r-r^{-1})^2}{2(r+r^{-1})}& j=m \text{ is even},\\
\frac{r_j}{r_m}r  & j<m, j\text{ odd},\\
\frac{r_j}{r_m}\frac{(r+1)(r-1)}{r+r^{-1}} & j<m, j\text{ even},
\end{cases}
\]
We now verify that $(LU)_{km}=\mathcal{A}_{km}$ for every pair of $k,m=1,\ldots , M$.\\

We first note the algebraic identity
\eqnb\label{alg_id_for_matrices}
\frac{(r^{-1}+1)(r^{-1}-1)(r+1)(r-1)}{(r-r^{-1})^2}  = -1.
\eqne

\noindent\texttt{Case 1.} If $k=m$ is odd then
\[
L_{kj} U_{jm} = \sum_{\substack{j<k \\
                  j\text{ odd} }} \frac{r_k}{r_m} \frac{2 r^{-1}}{r+r^{-1}} r + \sum_{\substack{j<k \\
                  j\text{ even} }} \frac{r_k}{r_m} \frac{2(r^{-1}+1)(r^{-1}-1)(r+1)(r-1)}{(r-r^{-1})^2(r+r^{-1})} + \frac{r+r^{-1}}2 .
\]
Noting that the first two sums cancel due to the identity \eqref{alg_id_for_matrices} and the fact that $k$ is odd (so that there is the same number of odd and even $j<k$), we obtain $(r+r^{-1})/2 = \mathcal{A}_{km}$, as required.\\

\noindent\texttt{Case 2.} If $k=m$ is even then
\[\begin{split}
L_{kj} U_{jm} &= \sum_{\substack{j<k \\
                  j\text{ odd} }} \frac{r_k}{r_m} \frac{2 r^{-1}}{r+r^{-1}} r + \sum_{\substack{j<k \\
                  j\text{ even} }} \frac{r_k}{r_m} \frac{2(r^{-1}+1)(r^{-1}-1)(r+1)(r-1)}{(r-r^{-1})^2(r+r^{-1})} + \frac{(r-r^{-1})^2}{2(r+r^{-1})} \\
                  &= \frac{2}{r+r^{-1}} + \frac{(r-r^{-1})^2}{2(r+r^{-1})} = \frac{r+r^{-1}}2=\mathcal{A}_{km},
\end{split}\]
where, in comparison to Case 1, we now have one more odd $j$ (than we have even $j$'s, as $k$ is even). \\

\noindent\texttt{Case 3.} If $k<m$ and $k$ is odd then
\[
L_{kj} U_{jm} = \sum_{\substack{j<k \\
                  j\text{ odd} }} \frac{r_k}{r_m} \frac{2 r^{-1}}{r+r^{-1}} r + \sum_{\substack{j<k \\
                  j\text{ even} }} \frac{r_k}{r_m} \frac{2(r^{-1}+1)(r^{-1}-1)(r+1)(r-1)}{(r-r^{-1})^2(r+r^{-1})} + \frac{r_k}{r_m}r =  \frac{r_k}{r_m}r =  \mathcal{A}_{km},
\]
as required. \\

\noindent\texttt{Case 4.} If $k<m$ and $k$ is even then
\[\begin{split}
L_{kj} U_{jm} &= \sum_{\substack{j<k \\
                  j\text{ odd} }} \frac{r_k}{r_m} \frac{2 r^{-1}}{r+r^{-1}} r + \sum_{\substack{j<k \\
                  j\text{ even} }} \frac{r_k}{r_m} \frac{2(r^{-1}+1)(r^{-1}-1)(r+1)(r-1)}{(r-r^{-1})^2(r+r^{-1})} + \frac{r_k}{r_m}\frac{(r-1)(r+1)}{r+r^{-1}} \\
                  &= \frac{r_k}{r_m} \left( \frac{2}{r+r^{-1}} + \frac{(r-1)(r+1)}{r+r^{-1}}\right)= \frac{r_k}{r_m} r = \mathcal{A}_{km},
                  \end{split}
\]
as required. \\

\noindent\texttt{Case 5.} If $k>m$ and $m$ is odd then
\[\begin{split}
L_{kj} U_{jm} &= \sum_{\substack{j<m \\
                  j\text{ odd} }} \frac{r_k}{r_m} \frac{2 r^{-1}}{r+r^{-1}} r + \sum_{\substack{j<m \\
                  j\text{ even} }} \frac{r_k}{r_m} \frac{2(r^{-1}+1)(r^{-1}-1)(r+1)(r-1)}{(r-r^{-1})^2(r+r^{-1})} \\
                  &\hspace{1cm}+ \frac{r_k}{r_m} \frac{2r^{-1}}{r+r^{-1}} \frac{r+r^{-1}}2 =  \frac{r_k}{r_m}r^{-1} =  \mathcal{A}_{km},
                  \end{split}
\]
as required. \\

\noindent\texttt{Case 6.} If $k>m$ and $m$ is even then
\[\begin{split}
L_{kj} U_{jm} &= \sum_{\substack{j<m \\
                  j\text{ odd} }} \frac{r_k}{r_m} \frac{2 r^{-1}}{r+r^{-1}} r + \sum_{\substack{j<m \\
                  j\text{ even} }} \frac{r_k}{r_m} \frac{2(r^{-1}+1)(r^{-1}-1)(r+1)(r-1)}{(r-r^{-1})^2(r+r^{-1})} \\
                  &\hspace{1cm}+ \frac{r_k}{r_m} \frac{2(r^{-1}+1)(r^{-1}-1)}{(r-r^{-1})^2} \frac{(r-r^{-1})^2}{2(r+r^{-1})}\\
                  &= \frac{r_k}{r_m} \left( \frac{2}{r+r^{-1}}+ \frac{(r^{-1}+1)(r^{-1}-1)}{r+r^{-1}}\right)=\frac{r_k}{r_m} r^{-1} = \mathcal{A}_{km},
                  \end{split}
\]
as required. \hfill $\square$

\subsection{Appendix - Nontriviality of nonsymmetric spirals}\label{sec_app2}

Here we verify Remark~\ref{rem_nontrivial}. Namely, we show that the nonsymmetric spirals obtained in Theorem~\ref{thm_main} are nontrivial in the sense that there are no $a>0$, $\mu \in \R$ such that $\Theta \coloneqq \overline{\Theta} = \frac{\pi}M (1,2,\ldots, M-1)$ (recall \eqref{angles_possible}), and $g_k=g\in \R\setminus \{ 0 \} $ is a solution to \eqref{eq-disc}. 

Indeed
\begin{align*}
& \sum_{k=0}^{M-1}\ee^{A(\theta_{k}-\theta_{m})}
\left\{\begin{aligned}
&\ee^{-\pi A} && \text{if} \ \theta_{k}>\theta_{m},\\
&\cosh(\pi A) && \text{if} \ \theta_{k}=\theta_{m},\\
&\ee^{\pi A} && \text{if} \ \theta_{k}<\theta_{m},
\end{aligned}\right. \\
&\qquad =\sum_{k=0}^{m-1} \ee^{\pi A (k-m)/M}\ee^{\pi A} + \cosh(\pi A) + \sum_{k=m+1}^{M-1} \ee^{\pi A(k-m)/M}\ee^{-\pi A} \\
& \qquad =\ee^{-\pi A m/M}\frac{1-\ee^{\pi A m/M}}{1-\ee^{\pi A /M}}\ee^{\pi A} + \cosh(\pi A ) + \left(\frac{1-\ee^{\pi A (M-m)/M}}{1-\ee^{\pi A/M}}-1\right)\ee^{-\pi A} \\
&\qquad = \sinh (\pi A) \frac{1+\ee^{\pi A /M}}{\ee^{2\pi A /M}-1}+\ee^{-\pi A m/M}\frac{\ee^{\pi A}-1}{1-\ee^{\pi A/M}}.
\end{align*}
Thus \eqref{eq-disc} fails in the case $\Theta = \overline{\Theta}$ and $g_{k}=g$ as the right-hand side above takes different values for $m\in \{ 1, \ldots ,  M-1\}$.

\section*{Acknowledgments}
T.C. was partially supported by the National Science Centre grant SONATA BIS 7 number UMO-2017/26/E/ST1/00989. W.S.O. was supported in part by the Simons Foundation. 

\end{document}